\newcommand{\orb}[1]{O_{#1}} 
\newcommand{\countequal}[1]{c_{#1}}
\newcommand{\countupto}[1]{C_{#1}}
\DeclareMathOperator{\wl}{\mathrm{WL}}
\DeclareMathOperator{\gl}{\mathrm{GL}}
\DeclareMathOperator{\si}{\mathrm{SIN}}
\def\map#1#2#3{\mbox{${#1}\colon {#2} \longrightarrow {#3}$}}
\theoremstyle{plain}
\newtheorem{theorem}{Theorem}[section]
\newtheorem{proposition}[theorem]{Proposition}
\newtheorem{conjecture}{Conjecture}
\newtheorem{open problem}{Open Problem}
\theoremstyle{definition}
\theoremstyle{remark}
\newtheorem{remark}[theorem]{Remark}
\subjclass[2010]{Primary 57M50, Secondary 	11A99 }
   \keywords{hyperbolic surfaces, word length, geometric length intersection number, growth}
\title[Non-abelian number theory]{Non-Abelian number theory and the structure of curves on surfaces}
\author{Moira Chas}
\begin{document}
\maketitle
\begin{abstract} In this note we study numerically the combinatorics of curves and geodesics on the torus with one boundary component. A potential computational difficulty is avoided by counting inside specific orbits of the mapping class group up to a certain length, either geometric or combinatorial. Some cases are rigurolosly determined and the Euler totient function emerges. More complicated orbits are computed to suggest an array of formulae continuing to involve the Euler totient function.  We formulate six precise conjectures. These include a novel study of an ``inverse'' function of the Mirzakhani's asymptotics. 
The geometric part of our study was motivated by the rationality aspect of these asymptotics. 

\end{abstract}

\section{Introduction}
The patterns discovered here are based on computer experiments using algorithms about curves on surfaces with given intersection properties. 
Even though the number of classes of curves up to a certain length has exponential growth, the curves with given self-intersection properties are much thinner say of polynomial growth. For example, Mirzakhani's thesis \cite{mir} gives polynomial estimates for embedded curves in general surfaces. We make  use here  of a useful astuce  in that work to consider orbits of the component group $M(g,n)$ of  homeomorphisms of the surface $S$ of genus $g$ and $n$ punctures  to overcome the numerical difficulty presented by the exponential search for thin subsets with fixed intersection number. 

Our computations begin with the torus with one boundary component. We try to understand how many curves there are in one orbit of $M(1,1)$ with fixed intersection number $k$, for $k=0,1,2,3$ and word length $\ell$ (relative to a given set of generators $a, b$ of the fundamental group). We count unoriented curves to simplify the matter, since all curves oriented counts  double the unoriented counts.

A very elementary observation is that for self-intersection number $0$ there is only one interesting orbit $\alpha$ and one has an exact formula $2\Phi(\ell)$ for the number of elements of word length $\ell$ in its orbit, denoted $\countequal{\ell}(\alpha)$, where $\Phi(\ell)$ is the number of positive integers less than $\ell$ which are relatively prime to $\ell$ (Euler's totient function). The sum over $n$ up  to $\ell$ of $2\Phi(n)$ is asymptotic to $\frac{6}{\pi^2}\ell^2$ \cite{hw}, thus the number of elements in the orbit of $\alpha$ up to word length $\ell$ is asymptotic to $\frac{6}{\pi^2}\ell^2$.

In the experimental work we found these patterns hold for  more complicated orbits of more general words. Namely, for $k=1$
and partially for $k=2$ we can rigorously compute the cardinality of the orbits of curves of self-intersection $k$ of word length $\ell$ in terms of the Euler totient function $\Phi$ evaluated at various points, see Table~\ref{t2} . This precise result implies that the limit as $\ell$ tends to infinity of the sum up to $\ell$ is a certain rational number times $\frac{6}{\pi^2}$ as coefficients of $\ell^2$ for the asymptotics. See Tables~\ref{t2}, \ref{t3}, Conjecture~\ref{palpha}.

The first steps of these results were presented with related experiments in \cite{c15} and later discussed in a lecture  the Sixth Iberoamerican Congress on Geometry, CUNY, May 2015.

 Word  came from two sources who either heard the CUNY lecture or read the notes \cite{c15} (Erlandsson-Souto \cite{es}, Mirzakhani \cite{m16}) that the asymptotics $c L^2$,  I was alluding to, could be rigorously proven in the context of geometric length,  instead of our context of word length. Furthermore, I learned from Mirzakhani, that her constant factored into a piece only depending on the orbit, a piece only depending on the topology of the surface, and a piece only depending on the geometry. It was now incumbent to run experiments with geometric length in order to compare those constants with our findings for word length.

To  do this one  must compute geometrically. We describe here how  to construct explicit geometries on the torus with one geodesic boundary component and how to compute \emph{all} the terms in an orbit up to a specific geometric length. \emph{The ensuing computations showed the Mirzakhani constants depending on $\alpha$ were indeed rational and actually agreed with the rationals we had found in the combinatorial word length setting. }

Finally, we propose that these patterns that intertwine asymptotics of geometry and asymptotics of combinatorial group theory with the primes via Euler's totient function for the torus with one boundary component extend to general surfaces of genus $g$  and $n$ boundary components. All in all the subject, both  experimental and theoretical, takes the shape of a kind of non-Abelian Number theory. This of course fits well with the 70's discovery of Bill Thurston that at the level of moduli spaces the integral projective structure on the boundary of the hyperbolic plane (tantamount to the p/q linear foliations of the torus) generalizes to a piecewise integral projective structure on Thurston's boundary of the higher genus marked moduli space.

%\tableofcontents
\textbf{ Acknowledgements: } The geometric construction of the metric on the torus with one boundary component depended on conversations with Bernard Maskit followed up by a discussion with Feng Luo.
\section{Preliminaries}

We study  unoriented, non-power  free homotopy classes of closed curves on   the torus with one puncture or one boundary component.
Choose a pair $(a,b)$ of generators of the fundamental group of the one holed torus. (This choice is equivalent to choosing a pair of disjoint simple arcs with limiting endpoints at the puncture.)

Denote the inverses of $a$ and $b$ by $A$ and $B$ respectively (see Figure~\ref{gen}).

For a free homotopy class $\alpha$, $\si(\alpha)$ denotes the self-intersection number (that is, the smallest number of intersection points of representatives of $\alpha$ counted with multiplicity). The number of letters of the shortest word representing $\alpha$  in the $\{a,b,A,B\}$-alphabet is denoted by $\wl(\alpha)$. 
 One needs to think of these words as cyclically reduced rings of letters, (even though for typographical reasons they are written as linear words) because we consider free homotopy classes. (Recall that the fundamental group of the punctured torus is the free group in two generators, and that free homotopy classes of oriented curves are in bijective correspondence with conjugacy classes of the fundamental group). 

%In order to simplify the writing,  we are going to consider a hyperbolic metric on the torus so the boundary component is a puncture. T
Define
$$
\orb{}(\alpha)=\{\beta  \mbox{ in the mapping class group orbit of $\alpha$} \},
$$
$$
\orb{\ell}(\alpha)=\{\beta  \mbox{ in $\orb{}(\alpha)$ such that } \wl(\beta)=\ell \},
$$
$$
\countequal{\ell}(\alpha)=\# \orb{\ell}(\alpha) \mbox{ and }
\countupto{\ell}(\alpha)=\sum_{n=1}^\ell \countequal{n}(\alpha)
$$
Intuitively, $\orb{}(\alpha)$ labels the ``topological picture'' of $\alpha$ on the surface, up to homeomorphism.	
 
We will  present representatives of certain orbits to explain and  illustrate our results. The generators we use are depicted in Figure~\ref{gen}. A cyclic word labeling a free homotopy class is obtained by recording  the crossing of the ``a'' and ``b'' arcs, with $a$ (resp. $b$) if the direction of the arrows is respected and $A$ (resp. $B$) otherwise.

\begin{figure}
\includegraphics[scale=.2]{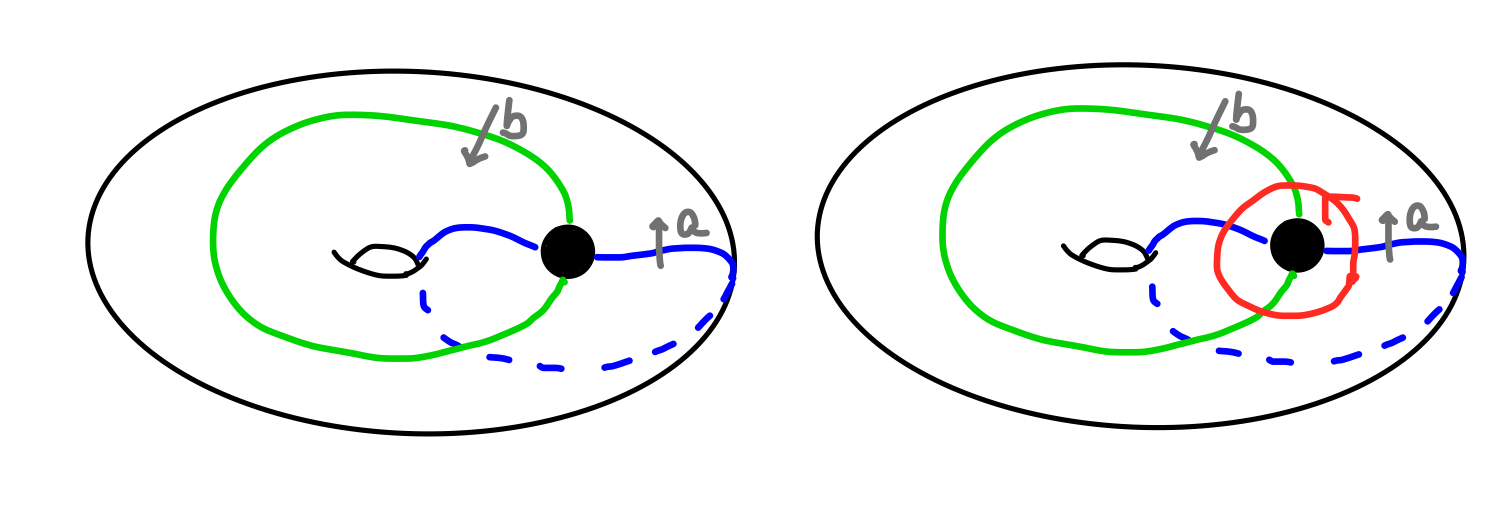} 
\caption{Arcs determining generators of the fundamental group of the punctured torus (left) and a representative in red of the class $abAB$ (right)}\label{gen}
\end{figure}

\section{Growth of the number of  classes of self-intersection number either zero, one, two or three as a function of the word length}

Consider one of the chosen generators of the fundamental group, say $a$. Observe that $a$ is simple (i.e. represented by an embedded closed curve). In order to simplify notation, we will denote the free homotopy class containing $a$ by $a$ also. 
Because such simple  curves do not separate, one sees that   classes in $\orb{\ell}(a)$ are in one to one correspondence with ordered pairs of  relatively prime integers $(j,k)$ with $k \ge 0$ and such that $|j|+|k|=\ell$. %(For instance, lift the curves to $\R^2 \setminus \Z^2$.)  
 Thus, we have the following proposition,  
 where $\Phi$ denotes the Euler totient function.
($\Phi(\ell)$ is the number of positive integers smaller than $\ell$ and relatively prime with $\ell$. )
 
\begin{proposition} \label{simple} The number of elements of word length $\ell$ in the orbit of a non-self-intersecting curve, not parallel to the boundary component is $2\Phi(\ell)$. Therefore, the number of elements of word length less than or    equal  to $\ell$ grows like $\frac{6}{\pi^2}\ell^2$. In symbols, 

 $\countequal{\ell}(a)=2\Phi(\ell)$ and 
$\countupto{\ell}(a)=\sum_{n=1}^{\ell}  2\Phi(n)$ which grows like $\frac{6}{\pi^2}\ell^2$ \cite{hw}.

\end{proposition}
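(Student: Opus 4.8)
The plan is to treat the combinatorial content of the statement separately from its topological input. The correspondence asserted in the paragraph preceding the proposition — that the unoriented classes in $\orb{\ell}(a)$ are indexed by coprime pairs $(j,k)$ with $k \ge 0$ and $|j|+|k|=\ell$ — is the one substantive geometric fact, and I would take it as given while recalling briefly why it holds, so that the proof is self-contained. The orientation-preserving mapping class group of the one-holed torus is $\mathrm{SL}(2,\Z)$, acting on $H_1 \cong \Z^2$ by the standard representation, which is transitive on primitive vectors; unoriented non-separating simple closed curves correspond to primitive homology classes up to the simultaneous sign change $(j,k)\mapsto(-j,-k)$, and the boundary-parallel class (homologically trivial) is excluded exactly as the hypothesis requires. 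The normalization $k\ge 0$ selects a representative of each sign-pair. The remaining ingredient is that the minimal cyclically reduced $\{a,b,A,B\}$-word realizing the slope $(j,k)$ has length $|j|+|k|$, i.e. $\wl$ agrees with the $\ell^1$-norm on primitive homology; this is the classical (Christoffel/Stern--Brocot) description of simple curves on the torus.

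Granting the bijection, I would compute $\countequal{\ell}(a)$ by an elementary count. For $\ell \ge 2$ the condition $\gcd(j,k)=1$ forces $1 \le k \le \ell-1$: the value $k=0$ is excluded because $\gcd(\ell,0)=\ell>1$, and $k=\ell$ is excluded because it forces $j=0$ with $\gcd(0,\ell)=\ell>1$. Hence $|j|=\ell-k\ge 1$, so $j=\pm(\ell-k)$ gives two choices of sign, and since $\gcd(\ell-k,k)=\gcd(\ell,k)$ the admissible $k$ are precisely the integers in $\{1,\dots,\ell-1\}$ coprime to $\ell$, of which there are $\Phi(\ell)$ by definition. Therefore $\countequal{\ell}(a)=2\Phi(\ell)$. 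I would verify the small cases $\ell=1,2,3$ by hand to confirm that the boundary of the range and the sign normalization behave as claimed.

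For the growth statement, summing gives $\countupto{\ell}(a)=\sum_{n=1}^{\ell}2\Phi(n)$, so it suffices to establish $\sum_{n\le\ell}\Phi(n)\sim\tfrac{3}{\pi^2}\ell^2$. I would prove this from the M\"obius identity $\Phi(n)=\sum_{d\mid n}\mu(d)\tfrac{n}{d}$ together with a Dirichlet interchange of summation,
\[
\sum_{n\le\ell}\Phi(n)=\sum_{d\le\ell}\mu(d)\sum_{m\le\ell/d}m=\tfrac12\sum_{d\le\ell}\mu(d)\Bigl(\bigl\lfloor\tfrac{\ell}{d}\bigr\rfloor^2+\bigl\lfloor\tfrac{\ell}{d}\bigr\rfloor\Bigr),
\]
whose leading term is $\tfrac{\ell^2}{2}\sum_{d=1}^{\infty}\tfrac{\mu(d)}{d^2}=\tfrac{\ell^2}{2\zeta(2)}=\tfrac{3}{\pi^2}\ell^2$, with error $O(\ell\log\ell)$ coming from truncating the series and from the floor functions; alternatively one simply cites \cite{hw}. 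Multiplying by $2$ yields $\countupto{\ell}(a)\sim\tfrac{6}{\pi^2}\ell^2$. The only genuinely nontrivial step is the topological bijection, in particular the identity $\wl=|j|+|k|$ on simple slopes; once that is in hand, both assertions reduce to standard elementary number theory, and the main care needed is in the small-$\ell$ normalization and in controlling the error term of the totient sum.
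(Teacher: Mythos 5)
Your proposal follows essentially the same route as the paper: the paper simply asserts the bijection between classes in $\orb{\ell}(a)$ and coprime pairs $(j,k)$ with $k\ge 0$, $|j|+|k|=\ell$ (justified by the curves being non-separating), counts these to get $2\Phi(\ell)$, and cites \cite{hw} for the summatory asymptotic. You supply more detail at each step (the $\mathrm{SL}(2,\Z)$ action on primitive homology classes, the $\wl=|j|+|k|$ identity, and the M\"obius-inversion proof of $\sum_{n\le\ell}\Phi(n)\sim\tfrac{3}{\pi^2}\ell^2$), but the argument is the same one, correctly carried out.
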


There are two orbits of simple curves, the orbit of $a$ and the orbit of $abAB$, the orbit of the curve that goes around  the punctured (which is fixed by the mapping class group). Now, we discuss orbits of self-intersection number one, see Figure \ref{one1}.

\begin{proposition}\label{selfint one}
\begin{enumerate}
\item The number of elements of word length $\ell$ in the  orbit of $aabAB$ is $4\Phi(\ell-4)$
\item The number of elements of word length $\ell$ in the orbit of $abaB$ is $2\Phi(\ell/2)$ if $\ell$ is even and $0$ otherwise.
\end{enumerate}
\end{proposition}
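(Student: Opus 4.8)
The plan is to reduce both counts to the lattice--point count already established in Proposition~\ref{simple}, by combining the action of the mapping class group on homology with a fibrewise analysis over each homology class. Writing $H_1=\Z^2$ with $a\mapsto(1,0)$ and $b\mapsto(0,1)$, one has $aabAB=a\,[a,b]$, so its class is $(1,0)$, while $abaB$ has class $(2,0)$. The mapping class group acts on $H_1$ as $\gl(2,\Z)$, transitively on primitive vectors and transitively on the vectors $2v$ with $v$ primitive. Hence the homology image of $\orb{}(aabAB)$ is exactly the set of primitive classes, and that of $\orb{}(abaB)$ is exactly the set of classes of $\gcd$ equal to $2$. By $\gl(2,\Z)$-equivariance, the number of orbit elements lying over a class $v$, of each fixed word length, depends only on the $\gl(2,\Z)$-orbit of $v$; so it suffices to (i) count the classes in a single fibre and (ii) express $\wl$ as a function of $v$.

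For the fibre count I would exploit that the Dehn twist $\tau_a\colon a\mapsto a,\ b\mapsto ba$ fixes \emph{both} base words: direct substitution gives $\tau_a(aabAB)=aabAB$ and $\tau_a(abaB)=abaB$ after cyclic reduction. Since $\tau_a$ together with the sign involutions generates the $\gl(2,\Z)$-stabiliser of $(1,0)$, the fibre over a primitive class is a finite, explicitly describable set. For $aabAB$ I expect exactly two unoriented classes per slope --- over $(1,0)$ these are $aabAB$ and $aaBAb$, interchanged by the involution $a\mapsto A,\ b\mapsto B$ --- which, multiplied by the $2\Phi(\cdot)$ count of slopes from Proposition~\ref{simple}, produces the factor $4$. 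For $abaB$ the analogous analysis over the $\gcd=2$ classes should give a single class per slope and, crucially, force every word length to be even.

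The heart of the argument is step (ii): producing an explicit cyclically reduced representative over the slope $(j,k)$ and reading off its length. I would build it as the mapping--class image of the base word (equivalently, as a controlled decoration of the Christoffel word of slope $(j,k)$, which has length $|j|+|k|$) and prove $\wl=|j|+|k|+4$ in the first case and $\wl=2(|j|+|k|)$, i.e.\ $\ell/2=|j|+|k|$, in the second. Here one must resist the naive guess $x\,[a,b]$: for instance $b\,[a,b]=babAB$ cyclically reduces to $abA\sim b$, a \emph{simple} class, so the commutator decoration survives only for the correct representatives and one must track the extremal letters of the Christoffel word through the reduction. Once the length is pinned as an affine function of $(j,k)$, counting the slopes with $|j|+|k|=\ell-4$ (resp.\ $|j|+|k|=\ell/2$) is exactly the computation of Proposition~\ref{simple} and returns $2\Phi(\ell-4)$ (resp.\ $2\Phi(\ell/2)$).

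I expect the main obstacle to be precisely this step, in three linked pieces: controlling the junction cancellations so that $\wl$ is \emph{exactly} the stated affine function of $(j,k)$ for every slope; verifying that each candidate word has self-intersection exactly $1$ and is the geodesic word in its class; and proving completeness, that the listed classes exhaust the fibre with no coincidences. For the self-intersection I would use the linked-pair combinatorial count for cyclic words on the once-punctured torus, and for the length and completeness the explicit first/last-letter structure of Christoffel words. For part (2) one must additionally rule out odd word lengths, which should follow from the $\gcd=2$ homology together with the parity of the constructed normal form.
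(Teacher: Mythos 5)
Your route is genuinely different from the paper's. The paper argues by surgery: for $aabAB$ it cuts off the loop encircling the boundary to get a two-to-one surjection $\orb{\ell}(aabAB)\to\orb{\ell-4}(a)$, and for $abaB$ it fills in the puncture to get a bijection with squares of simple classes on the closed torus. You instead fibre each orbit over its image in $H_1\cong\Z^2$ and count the fibres via the stabiliser of the homology class. That part of your argument is sound and in fact tighter than the paper's: the action on free homotopy classes factors through $\gl(2,\Z)$, the fibre over $\pm(1,0)$ is the orbit of the base word under the stabiliser of $\pm(1,0)$, which is generated by $\tau_a$ and the sign involutions, and your computations $\tau_a(aabAB)=aabAB$, $\tau_a(abaB)=abaB$ check out; the fibres are indeed $\{aabAB,\,aaBAb\}$ and $\{abaB\}$. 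Two small repairs: your opening equivariance claim is misstated, since word length is not mapping-class-group invariant --- what is orbit-independent is the fibre \emph{cardinality}, and your step (ii) is what makes the count per word length come out; and the self-intersection check you list as an obstacle is unnecessary if the candidate over $(j,k)$ is defined as $\phi(aabAB)$ for a mapping class $\phi$, since orbit membership is then automatic.

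The genuine gap is the step you yourself flag as the main obstacle: the identities $\wl(\phi(aabAB))=|j|+|k|+4$ and $\wl(\phi(abaB))=2(|j|+|k|)$ when $\phi$ carries $(1,0)$ to the primitive class $(j,k)$. This is the entire quantitative content of the proposition --- without it the fibre counts yield nothing --- and in your write-up it is only ``expected,'' supported by a cautionary example showing that the naive decoration fails. Concretely, $\phi(aabAB)$ is conjugate to $w\cdot g[a,b]^{\pm1}g^{-1}$ with $w=\phi(a)$ cyclically reduced of length $|j|+|k|$, and you must control the cancellation at both junctions and under cyclic reduction uniformly in $(j,k)$; tracking the extremal letters of the Christoffel word is the right idea, but that is exactly where the work lies and nothing in the proposal pins it down. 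The same lemma is what would force the parity restriction in part (2). By contrast, the paper's surgery makes the length drop of $4$, respectively the doubling, essentially visible (each added loop about the puncture crosses all four generator arcs once), at the cost of a looser justification of two-to-one-ness and injectivity. Until that length lemma is proved, the proposal is a plausible programme rather than a proof.
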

\begin{proof}
We define a map  $\map{f}{ \orb{\ell}(aabAB) }{\orb{\ell-4}(a)}$.
If $\alpha \in \orb{\ell}(aabAB)$, set $f(\alpha)$ as the free homotopy class obtained by "cutting off" the loop of $\alpha$ that goes around the boundary component. Since $\alpha$ has self-intersection number one, the loop of $\alpha$ around the boundary component intersects each of the four arcs emanating (these are the arcs associated to the chosen generators) from the boundary component once. Thus, removing the loop about the boundary components  is the same as removing four letters ($abAB$ or inverse) for the word of $\alpha$. This implies that $\map{f}{ \orb{\ell}(aabAB) }{\orb{\ell-4}(a)}$. 

To see that the map $f$ is is two-to-one and surjective, take a simple curve $\beta$ and represent it by a closed geodesic $g$ on the flat torus that misses the puncture. Consider a short segment through the puncture at right angles to the inclination of $g$, extended both ways to hit $g$. At each of these points one can add a loop to $g$ around the puncture to create a self-intersection.

One can  study the orbit of $abaB$ in a  way similar to the way for  the orbit of $aabAB$: to obtain a map $$\map{g}{ \orb{\ell}(abaB) }{\mbox{\{square of simple classes in closed torus\}}}$$ given by filling in the boundary component.  This map is one to one.

\end{proof}

\begin{proposition}
 The number of classes of self-intersection number  one up to word length  $\ell$ grows like  $\countupto{\ell}(aabAB)+\countupto{\ell}(abaB)$,  $(2+\frac{1}{4})\frac{6}{\pi^2}\ell^2=\frac{27}{2\pi^2}\ell^2$.

\end{proposition}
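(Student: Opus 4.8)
The plan is to reduce the statement to two independent ingredients. The first is a classification ingredient: that every free homotopy class of self-intersection number one lies in exactly one of the two orbits $\orb{}(aabAB)$ and $\orb{}(abaB)$, so that the total number of such classes up to word length $\ell$ is precisely $\countupto{\ell}(aabAB)+\countupto{\ell}(abaB)$. The second is an analytic ingredient: the partial-sum asymptotics of Euler's totient function, $\sum_{n=1}^{N}\Phi(n)\sim\frac{3}{\pi^2}N^2$, recorded in \cite{hw} and already invoked for Proposition~\ref{simple}. Granting the classification, the proposition becomes a direct computation with the two exact formulas of Proposition~\ref{selfint one}.

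For the orbit of $aabAB$ I would reindex by $m=n-4$ and write $\countupto{\ell}(aabAB)=\sum_{n=5}^{\ell}4\Phi(n-4)=4\sum_{m=1}^{\ell-4}\Phi(m)$. Since a bounded shift of the summation range does not affect the leading term and $(\ell-4)^2\sim\ell^2$, the totient asymptotic gives $\countupto{\ell}(aabAB)\sim 4\cdot\frac{3}{\pi^2}\ell^2=\frac{12}{\pi^2}\ell^2=2\cdot\frac{6}{\pi^2}\ell^2$, which supplies the coefficient $2$. For the orbit of $abaB$ only even word lengths contribute, so setting $n=2m$ I would write $\countupto{\ell}(abaB)=\sum_{m=1}^{\lfloor\ell/2\rfloor}2\Phi(m)\sim 2\cdot\frac{3}{\pi^2}(\ell/2)^2=\frac14\cdot\frac{6}{\pi^2}\ell^2$; the rescaling $m\le\ell/2$ is exactly what produces the factor $\frac14$. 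Adding the two contributions yields $(2+\frac14)\frac{6}{\pi^2}\ell^2=\frac94\cdot\frac{6}{\pi^2}\ell^2=\frac{27}{2\pi^2}\ell^2$, as claimed.

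The analytic steps are routine once the totient asymptotic is in hand, so I expect the genuine obstacle to be the classification ingredient: ruling out any further mapping class group orbit with self-intersection number one. I would establish this through the resolution/surgery viewpoint already implicit in the proof of Proposition~\ref{selfint one}. Resolving the unique self-intersection of a class with $\si(\beta)=1$ produces a simple curve, and the way the crossing sits relative to the puncture — whether the small resolving loop encircles the puncture, as in the $aabAB$ construction, or not, as in the $abaB$ construction obtained by filling in the boundary — partitions the classes into exactly two families. Making this dichotomy precise, and verifying that the mapping class group acts transitively within each family so that no class is double-counted and none is omitted, is where the real content lies; alternatively one may invoke the known classification of small-self-intersection orbits on the once-punctured torus. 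Once exhaustiveness of the two orbits is secured, the equality of the total count with $\countupto{\ell}(aabAB)+\countupto{\ell}(abaB)$ is immediate and the asymptotic computation above finishes the proof.
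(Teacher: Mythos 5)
Your proposal is correct and follows essentially the same route the paper intends: the paper gives no explicit proof of this proposition, deriving it directly from the two exact formulas of Proposition~\ref{selfint one}, the (asserted) fact that $aabAB$ and $abaB$ represent the only two orbits of self-intersection number one, and the totient asymptotic $\sum_{n\le N}\Phi(n)\sim\frac{3}{\pi^2}N^2$. Your arithmetic reproducing the coefficients $2$ and $\frac14$ is right, and your observation that the exhaustiveness of the two orbits is the only nontrivial ingredient (left implicit in the paper) is a fair and accurate assessment.
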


\begin{figure}
\includegraphics[scale=.2]{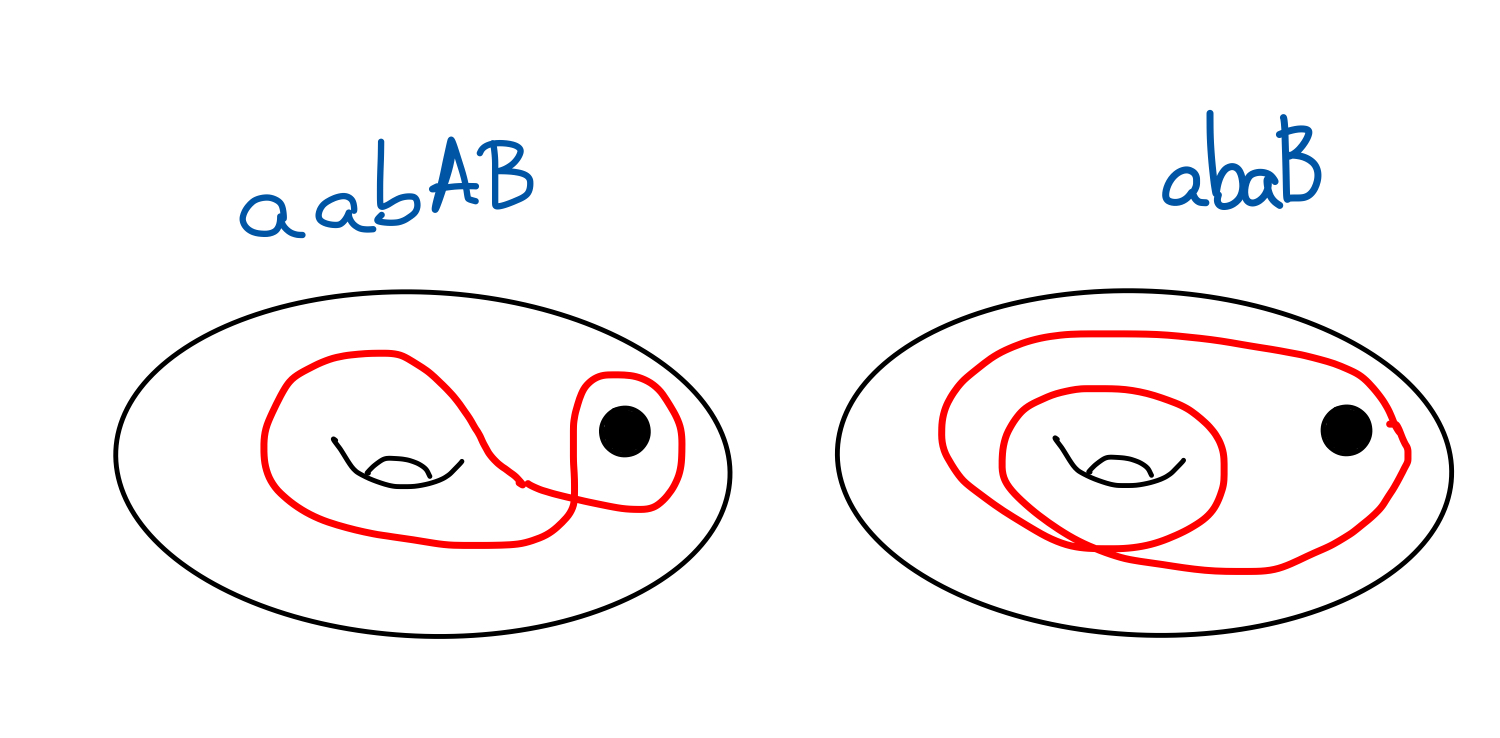} 
\caption{A representative of each orbit of self-intersection number one}\label{one1}
\end{figure}

From computer experiments we find then a conjecture about the number of elements of a given word length in each of the orbits of elements of self-intersection one, two, and three. 

There are six   orbits of curves of self-intersection two \cite{crispetal}, see Figure \ref{two2}. The REU preprint \cite{bck} claims that there are fifteen orbits of curves of self-intersection three, but according to our computer experiments, confirmed by work of our undergraduate student Joseph Suk, there are  fourteen. (Self-intersection three orbits are not pictured).

\begin{remark} The proofs of the counting of the number of elements in the orbits of $a^nabAB$ and  $a(abAB)^n$ are very similar to that for $aabAB$.
\end{remark}

{\renewcommand{\arraystretch}{1.5}
\begin{table}
\begin{tabular}{|c|c|c|c|c|c|c|}
\hline
&$\alpha$ & $p_\alpha$ & $\si(\alpha)$ & $\wl$ & $\countequal{\ell}(\alpha)$&Special cases\\
\hline
\hline
known& \text{a} & 1& 0 & All &$2\Phi(\ell)$&-\\
 \hline
 \hline
Prop.\ref{selfint one} & \text{aabAB} & 2  &1& All&$4\Phi(\ell-4)$, if $\ell\ge 5$&-\\
 \hline
Prop.\ref{selfint one} & \text{abaB} & $\frac{1}{4}$  &1& Even &$2\Phi(\ell/2)$, if $\ell\ge 6$&$\countequal{4}(\alpha)=4$\\
 \hline\hline
like Prop.\ref{selfint one} & \text{a(abAB)$^2$} & 2 &2& All&$4\Phi(\ell-8)$, if $\ell\ge 10$&$\countequal{9}(\alpha)=4$\\
 \hline
Unproven& \text{aaabb} & 2&2 & All&$2\Phi(\ell)+2\Phi(\ell-4)$, if $\ell\ge 6$&$\countequal{5}(\alpha)=8$\\
 \hline
 
Unproven &\text{aabAAB} & $\frac{1}{4}$ &2& Even &$2\Phi(\ell/2-2)$, if $\ell\ge 5$&-\\

 \hline 
like Prop.\ref{selfint one}  & \text{a$^2$abAB} & $\frac{1}{2}$ &2&Even&$4\Phi(\ell/2-2)$, if $\ell\ge 5$&-\\
 \hline

Unproven & \text{abaBabAB} & $\frac{1}{2}$ &2& Even &$4\Phi(\ell/2-2)$, if $\ell\ge 9$&$\countequal{8}(\alpha)=8$\\
 \hline
like Prop.\ref{selfint one}  & \text{aabaB} & $\frac{2}{9}$ &2& Mult. of   3 &$4\Phi(\ell/3)$, if $\ell\ge 6$&$\countequal{5}(\alpha)=4$\\
 \hline

 \hline
\end{tabular} 
\caption{The number of elements of word length $\ell$ in the orbit of curves of self-intersection number one and two. If nothing is said about a word length then the number of elements is zero. For instance,  $\countequal{3}(abaB)=0$). The number of elements up to word length $\ell$ in the orbit of $\alpha$, $\countupto{\ell}(\alpha)$ grows like $\frac{6}{\pi^2}p_\alpha$, where $p_\alpha$ is a rational number that depends on $\alpha$ (second column). The coefficients $p_\alpha$ are obtained using the fact that   $\sum_{n=1}^{\ell}  \Phi(n)$  grows like $\frac{3}{\pi^2}\ell^2$.
The formulae labeled unproven holds up to very long word length, even for the orbits of self-intersection three below.
}\label{t2}
\end{table}

}

\begin{figure}
\includegraphics[scale=.2]{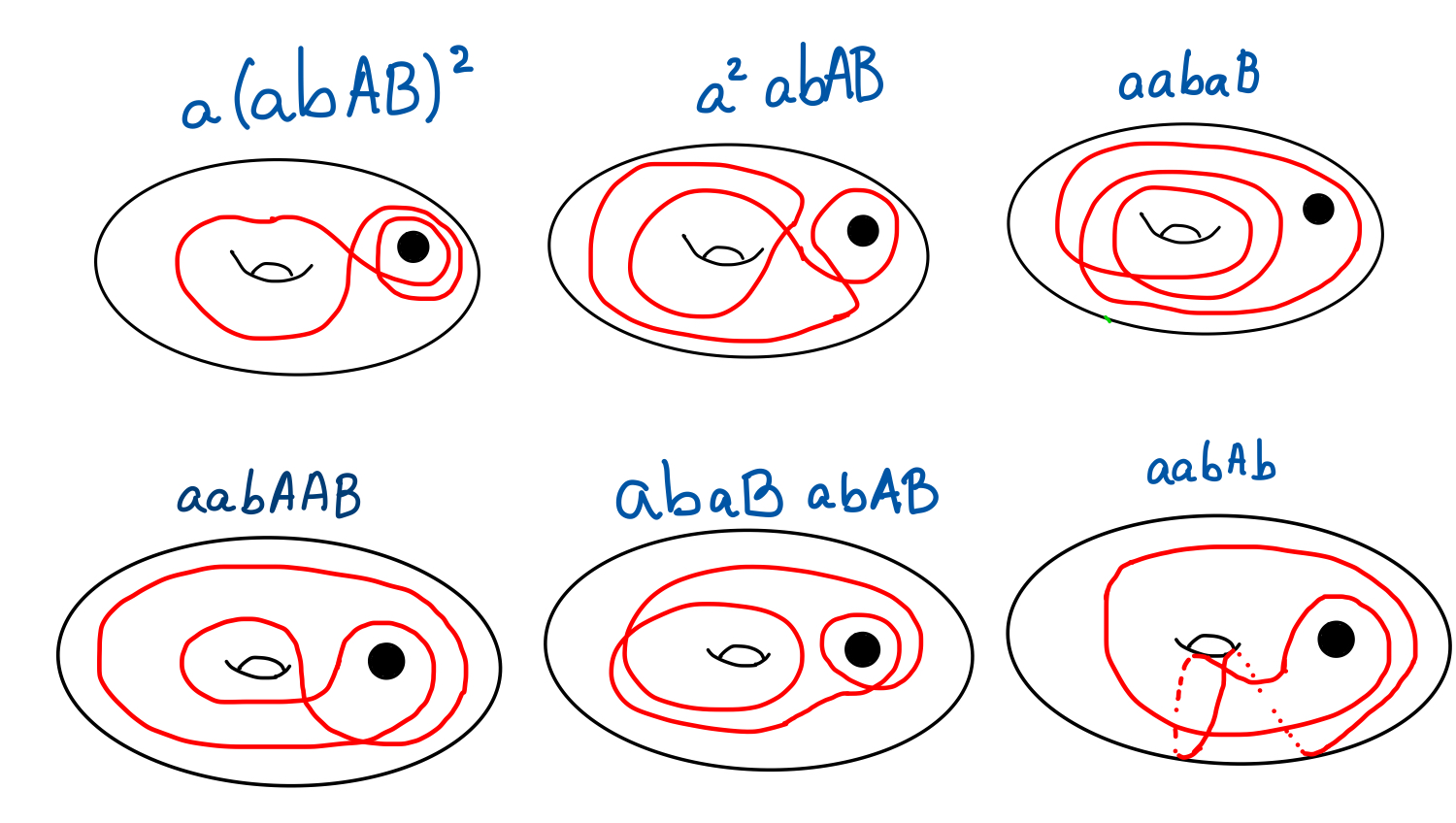} 
\caption{A representative of each orbit of self-intersection number two}\label{two2}
\end{figure}

{\renewcommand{\arraystretch}{2}
\begin{table}
\begin{tabular}{|c|c|c|c|c|c|}
\hline
$\alpha$ &$p_\alpha$. & SI& WLl of curves & $\countequal{\ell}(\alpha)$&Special\\
\hline
\hline

\text{a(abAB)$^3$} & 2 & 3 & All & $4\Phi(\ell-12)$, $\ell>12$ &-\\ \hline%1
\text{aabAbaBAb} & 4 &3 & All & $4\Phi(\ell-4)+4\Phi(\ell-8)$,  if $\ell>9$ &$\countequal{9}(\alpha)=16$\\ \hline%3
\text{aabbAB} & 4 &3 & All &$8\Phi(\ell-4)$, if $\ell>6$ &$\countequal{6}(\alpha)=4$\\ \hline%7
\hline
\text{a$^2$(abAB)$^2$} &  1/2 &3 &Even&$4\Phi(\ell/2-4)$,  if $\ell>9$&-\\ \hline%2

\text{aabAABabAB} & 1/2 &3 &Even&$4\Phi(\ell/2-4)$,  if $\ell>9$&-\\ \hline%4

\text{abaBAbaBAbaB} & 1/2 &3 &Even&$4\Phi(\ell/2-4)$, if $\ell>9$ &$\countequal{12}(\alpha)=8$ \\ \hline %5

\text{abaBAbAB} & 1/4 &3 &Even & $2\Phi(\ell/2-4)$, if $\ell>13$&$\countequal{8}(\alpha)=4$ \\ \hline%15
\text{a$^4$ b$^2$} & 1 &3 &Even & $4\Phi(\ell/2)+4\Phi(\ell/2-2)$, if $\ell > 7$& $\countequal{6}(\alpha)=10$  \\ \hline%12
\hline\hline

\text{a$^3$abAB} &  2/9 &3 &$\cong 1 \pmod 3$&4$\Phi((\ell+2)/3-2)$ if $\ell>6$ & -\\ \hline%6
\text{aaabAAB} &  2/9 &3 &$\cong 1 \pmod 3$&4$\Phi((\ell+2)/3-2)$ if $\ell>6$ & -\\ \hline%11
\text{aabaBAbaB} &  2/9 &3 &$\cong 1 \pmod 3$&4$\Phi((\ell+2)/3-2)$ if $\ell>9$ & 	 $\countequal{9}(\alpha)=4$\\ %10
\hline
\text{aabaBabAB} &  4/9 &3 &$\cong 1 \pmod 3$&8$\Phi((\ell+2)/3-2)$ if $\ell>9$ & $\countequal{9}(\alpha)=8$\\ \hline\hline%8

\text{aaabaB} & 1/8 &3 &Mult. of  4&$4\Phi(\ell/4)$ if $\ell>7$&$\countequal{6}(\alpha)=4$\\ \hline%13
\text{aabaaB} & 1/16 &3 &Mult. of  4&$2\Phi(\ell/4)$ if $\ell>7$&$\countequal{6}(\alpha)=2$\\ \hline%14

\end{tabular} 
\caption{The number of elements of word length $\ell$ in the orbit of curves of self-intersection number three.  
If nothing is said about a word length then the number of elements is zero. 
The number of elements up to word length $\ell$ in the orbit of $\alpha$,  $\countupto{\ell}(\alpha)$ grows like $\frac{6}{\pi^2}p_\alpha$, where $p_\alpha$ is a rational number that depends on $\alpha$ (second column)}\label{t3}
\end{table}
}

\section{Experimental study of the growth of orbits of free homotopy classes in terms of  geometric length}
\subsection{The metrics}\label{metrics}

Construction learned from Feng Luo, based on information of Bernard Maskit.

Fix a  hyperbolic metric $X$ on $T_{1,1}$, so that the boundary component is a geodesic. We can parametrize such a metric by three positive real numbers $(l_1,l_2,l_3)$ if  there is a  hyperbolic pentagon, with two consecutive sides of length $l_1$ and $l_2$, a side of length $l_3$ opposite to the angle formed by sides of length $l_1$ and $l_2$ and  right angles, except possibly for the angle of the sides of lengths $l_1$ and $l_2$. The latter angle will be acute here.

Denote by $G$ the vertex of sides with length $l_1$ and $l_2$. We are going to form a right angled octagon in the following way:  Consider the union of the pentagon and the same pentagon rotated $\pi$   about $G$. Extend the sides consecutive to those of length $l_3$ in both pentagons. Two other pentagons are determined. (See Figure~\ref{me})

Let $Y$ and $O$ denote the midpoints of the sides of the octagons perpendicular to the segments   of  sides of lengths $l_1$ and $l_2$  (in yellow and orange in Figure~\ref{me}).
Denote by $r_Y$, $r_O$ and $r_G$ the $\pi$ rotation about $Y$ and $O$ and $G$ respectively.

Set $a$ as $r_G r_Y$ and $b$ as $r_G r_O$.  This determines  a representation of our free group with generators $a$ and $b$, using the geometry.

\begin{figure}
\includegraphics[scale=.2]{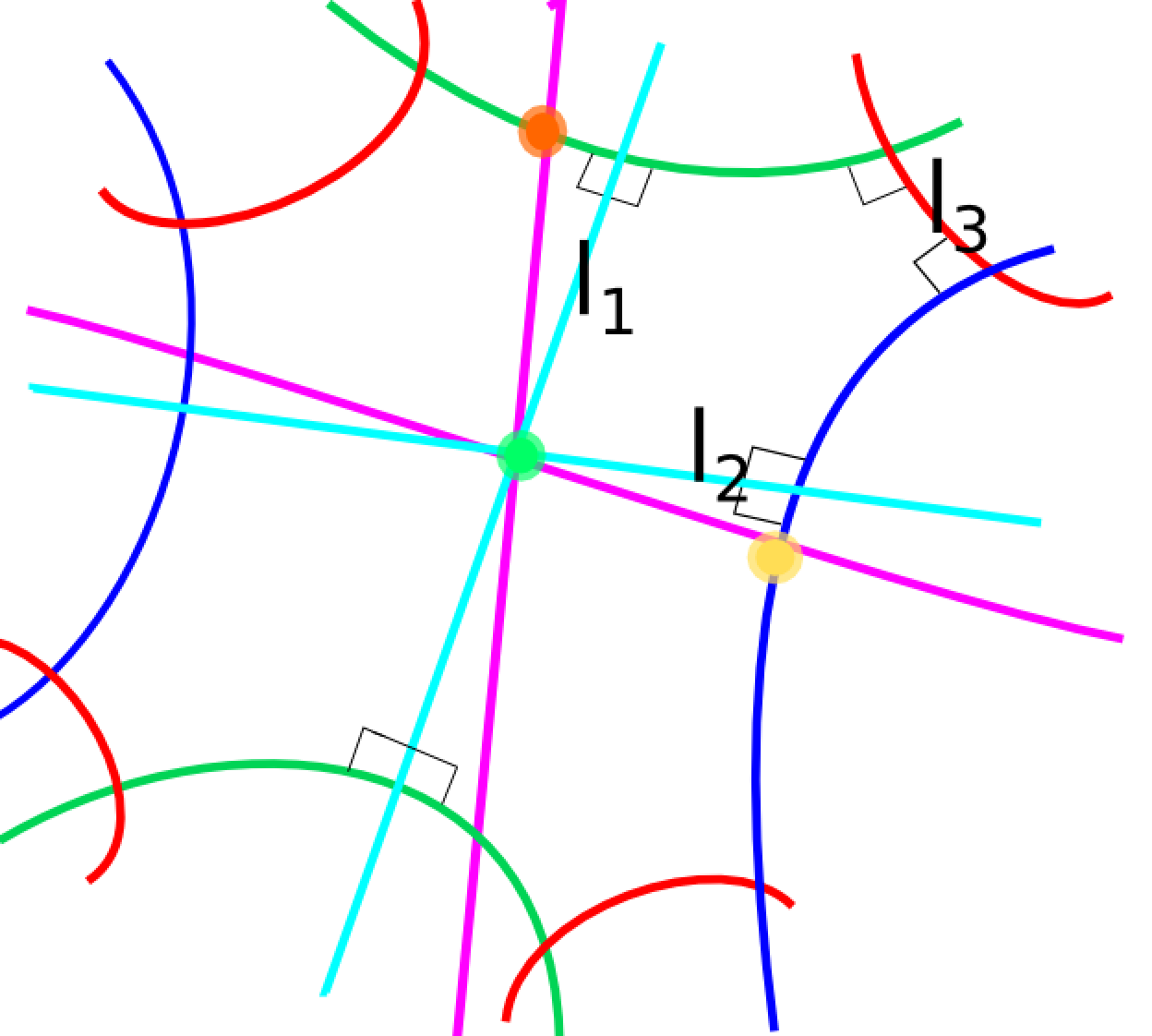} 
\caption{Metrics. The green, orange and yellow points are the midpoints. The axes of the generators are colored pink}\label{me}
\end{figure}

\textbf{How do we know we are counting all geodesics up to a certain length?}: The octagon is made of two pairs of congruent pentagons. The angle at $V$ of the original pentagon pair is chosen to be smaller than $\pi/2$. Hence the angle at $V$ of the other pentagon pair is larger than $\pi/2$. Thus  the side opposite to $V$ of the new pentagon is longer than the side opposite to $V$ of the original pentagon pentagon. This implies that any segment between "good" sides of the octagon is longer than $c=\min(2l_1,2l_2,l_3)$. Then for any geodesic $w$, 
$\gl(w) \ge c \wl(w)$.

This implies  the  inclusion.
\begin{proposition}\label{inclusion} For $c=\min(2l_1,2l_2,l_3)$,  
$$
\{w \mbox{ free hom. class/  } \gl(w)\le \ell \} \subset \{w \mbox{  free hom. class /} \wl(w)\le  \ell /c\}
$$
where the geometric length $\gl$ is the metric of parameters $(l_1,l_2,l_3)$
\end{proposition}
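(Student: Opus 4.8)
The plan is to observe that the asserted inclusion is equivalent to the pointwise inequality $\gl(w)\ge c\,\wl(w)$ for every free homotopy class $w$. Indeed, if that inequality holds, then $\gl(w)\le\ell$ forces $c\,\wl(w)\le\gl(w)\le\ell$, hence $\wl(w)\le\ell/c$, which is exactly membership in the right-hand set; conversely, applying the inclusion with $\ell=\gl(w)$ recovers the inequality. So I would prove $\gl(w)\ge c\,\wl(w)$ directly, and the inclusion follows for every $\ell$ at once.

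First I would pass to the universal cover, a convex region of $\H$ tiled by the translates of the right-angled octagon of Figure~\ref{me} under the free group $\langle a,b\rangle$. Of its eight sides, four are glued in two pairs by the generators $a=r_Gr_Y$ and $b=r_Gr_O$ (call these the \emph{good} sides), while the remaining four assemble into the boundary geodesic; crossing a good side is precisely what records one letter of the $\{a,A,b,B\}$-alphabet. Since $w$ is an interior class, its closed geodesic representative avoids the boundary, so its lift $\tilde\gamma\subset\H$ meets only good sides. Reading off the good sides crossed by $\tilde\gamma$ in one period of the translation by $w$ produces a cyclic word representing the conjugacy class $w$; as $\wl(w)$ is by definition the minimal such length, the number $N$ of good-side crossings per period satisfies $N\ge\wl(w)$ (in fact equality, since a geodesic reads off a cyclically reduced word).

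Next I would bound from below by $c$ each piece of $\tilde\gamma$ lying between two consecutive good-side crossings. Such a piece is a geodesic chord of a single tile, entering through one good side and leaving through another, so its length is at least the common-perpendicular distance of those two sides. It therefore suffices to show each such perpendicular distance is at least $c=\min(2l_1,2l_2,l_3)$, and here I would use the pentagon decomposition: the octagon is two congruent pairs of right-angled pentagons, and because the angle at $G$ is acute in the first pair and obtuse in the second, the side opposite $G$ in the second pair is the longer one, so the separations between glued sides are governed by the original data, namely the doubled sides $2l_1,2l_2$ and the opposite side $l_3$. Summing over the $N$ pieces of one period then gives $\gl(w)\ge cN\ge c\,\wl(w)$, as required.

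The main obstacle is this chord bound, and specifically the danger that $\tilde\gamma$ might cross two \emph{adjacent} good sides meeting at a right-angled vertex, where a chord cutting the corner could be arbitrarily short and violate the estimate. I would rule this out using that $\tilde\gamma$ realizes a cyclically reduced word: no generator is immediately followed by its inverse, so $\tilde\gamma$ never crosses a good side and then re-crosses its identified partner, and the right-angle structure forces every admissible consecutive pair of crossings to traverse the tile along a common perpendicular of length $\ge c$ rather than to clip a short corner. Turning this into a fully rigorous statement requires the explicit hyperbolic trigonometry of the pentagon, relating the acute angle at $G$ and the lengths $l_1,l_2,l_3$ to the perpendicular distances between the glued sides; this is the one genuinely computational ingredient, and I expect it to be where the care is needed.
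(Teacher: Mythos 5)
Your proposal follows essentially the same route as the paper: reduce the inclusion to the pointwise inequality $\gl(w)\ge c\,\wl(w)$, and obtain that inequality by showing every geodesic segment crossing the octagon between two ``good'' sides has length at least $c=\min(2l_1,2l_2,l_3)$, using the pentagon decomposition and the acute/obtuse angle comparison at the central vertex. You are in fact more careful than the paper's own sketch, which simply asserts the chord bound without addressing the corner-clipping issue you flag.
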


\subsection{The count of elements in the orbits}\label{counts}

 By \cite{m16} (see also \cite{es}) if $\gamma$  is a  geodesic in $T_{1,1}$  with respect to a hyperbolic metric $m$  then there exists a rational constant $c_\gamma$ such that the number $s_m(\ell, \gamma)$  of geodesics in their mapping class group orbit of $\gamma$, up  hyperbolic length $\ell$  grows like $ \frac{ \mu_{Th}(B_m(1))}{\pi^2/6}c_\gamma \ell^2 $. In symbols,
\begin{equation}\label{mirz}
s_m(\ell,\gamma) \sim \frac{ \mu_{Th}(B_m(1))}{\pi^2/6} c_\gamma \ell^2
\end{equation}
where  $B_m(1)$ is the unit ball in the space of measured geodesic laminations with respect to the length function at $m$ and $\mu_{Th}$ is the Thurston's measure of this ball.  Observe that $ \mu_{Th}(B_m(1))$ is a constant that depends only on the metric $m$ and not on $\gamma$. 

If $a$ is a simple curve in the punctured torus, not parallel to the boundary component, then  $c_a=\frac{1}{2!}$  (see \cite[page 110, Example 1]{mir}).
Hence 
$$
s_m(\ell,a) \sim  \frac{3}{\pi^2} \mu_{Th}(B_m(1))\ell^2.
$$

On the other hand, recall that by Proposition~\ref{simple},  $C_\ell(a)$ grows like $(6/\pi^2)\ell^2$. The similarity of the formulae for the geometric and combinatorial case prompted us to study the ratios $s_m(\ell,\gamma)/s_m(\ell,a)$ for $\ell$  for certain metrics $m$. In order to study these ratios, we estimated the coefficients of $\ell^2$ in Equation~(\ref{mirz}) in the next Subsection.

\subsection{Mirzakhani's function and its inverse}

 Using Nielsen's elementary transformations \cite{MKS}, we found all the elements of word length at most $170$ in the orbit of $\gamma$, that is, the set $\orb{170}(\gamma)$. (There are $\sum_{\ell=1}^{170} 2\phi(\ell)=17660$ classes in the orbit of the simple curve $a$. The number of elements in the orbits we studied, is of that order).

 We chose four  hyperbolic metrics with geodesic boundary  $m_1,m_2,m_3,m_4$. The parameters, $(l_1,l_2,l_3)$ of each of these four metrics are listed in the first three columns of Table~\ref{ratios}.
 
For each orbit $\orb{}(\gamma)$ of curves of self-intersection number less than or equal to three, and the four metrics $m_1,m_2,m_3,m_4$,  we computed all the $m_j$-lengths  of the geodesics in $\orb{170}(\gamma)$  and selected those smaller than 
 a given upper bound (for each metric, for computational reasons, we had to choose a different upper bound), as follows,

\begin{enumerate} 
 \item  For  each metric $m_j$ we determined the constant $c_j=\min(2l_1(j),2l_2(j),l_3(j))$ (so the inclusion Proposition~\ref{inclusion} holds) and set  $\ell_j=100/c_j$.
\item We computed (or estimated)   $$L( \ell_j,\gamma)=\{m_j(\alpha) \mbox{ : } \alpha \in \orb{\ell_j}(\gamma)\},$$ where $m_j(\alpha)$ denotes the $m_j$-length of the $\alpha$. (Note that repetitions of lengths must be allowed). 
\item For each $\gamma$ and $m_j$, we estimated the coefficient  of $\ell^2$, (that is, $\frac{3}{\pi^2} \mu_{Th}(B_m(1))$) in Equation~(\ref{mirz}), as 
$$h(\gamma, m_j)=\frac{M-u}{\sqrt{T}},$$ where 
$M=\max L( \ell_j,\gamma)$, 
$u=\min L( \ell_j,\gamma)$ and $T= \mbox{Cardinality of } L( \ell_j,\gamma)$.
\item We estimated the ratios $$h(\gamma, m_j)/h(a, m_j),$$ for all $\gamma$ of self-intersection three or less. (see Table~\ref{ratios})
\end{enumerate}

%Table
\begin{table}

\includegraphics[scale=.5]{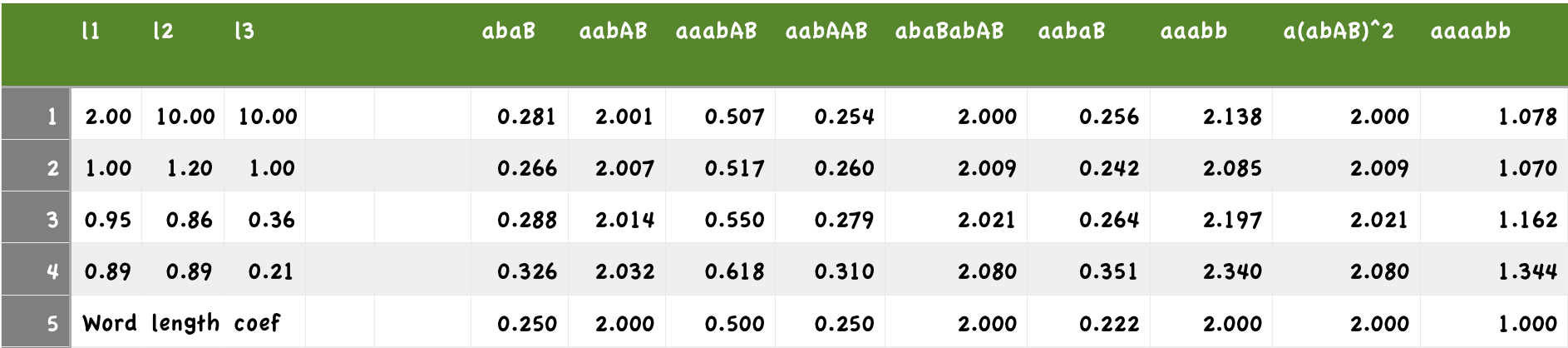} 
\caption{Ratios discuss in Subsection~\ref{counts}, (4). Also, in the last row, there is the growth coefficient of growth by word length comparison. }\label{ratios}
\end{table}

We produced the  graphs shown Figure~\ref{aabb}, explained below.  

Denote by  $\min(\gamma, m_j)$  the length of the shortest (resp. longest) $m_j$-geodesic in the orbit of $\gamma$, that is the minimum of $L(\gamma, \ell_j)$, and by  $\max(\gamma, m_j)$ the maximum of $L(\gamma, \ell_j)$. Note that $max(\gamma, m_j) \le 170/c_j$, (in general, it is  quite close).

Fix a curve $\gamma$ of self-intersection number three or less, and one of the metrics $m_j$.
To make the notation lighter, in the following lines,  set $M=max(\gamma, m_j)$, $u=min(\gamma, m_j)$, $T=s_{m_j}(\ell_j,\gamma)$.

For each of the metrics $m_1, m_2, m_3, m_4$, and each orbit of $\gamma$ of self-intersection number three or less, we produced the following graphics. In cases (i) and (ii) we study the actual graph and our estimation of this graph.

\begin{enumerate}[(i)]
\item The graph of Mirzakhani function from the reals to the integers, $\ell \mapsto s_m(\ell,\gamma)$, and the graph of the function from  the reals to  the reals $\ell \mapsto d (\ell - u)^2$, where $d=T/(M-u)^2$ is found by solving the equation $T=d \dot (M-u)^2$.
\item  The  graph of the function from the positive integers to the reals, $k \mapsto \mathrm{length}(m_j,\gamma)$ where $\mathrm{length}(m_j,\gamma))$ is the length $k$-th $m_j$-geodesic, ordered by geometric length. (``inverse'' of (1), and the graph of the function from  the reals to the reals $k \mapsto b \sqrt{k}+u$, where  $b=(M-u)/\sqrt{T}$.
\item The graph of the function $k\mapsto \frac{1}{\sqrt{k}}(-u+\mbox{length of the $k$-th geodesic in }\orb{}(\gamma))$.
\end{enumerate}

Some examples of these graphics are displayed in  Figures~\ref{aabb} and \ref{aabb1}. The even rows show the graphics up to the largest domain we could, and the odd rows show the graphics for small initial segment of this domain.

\begin{figure}
\includegraphics[scale=.325]{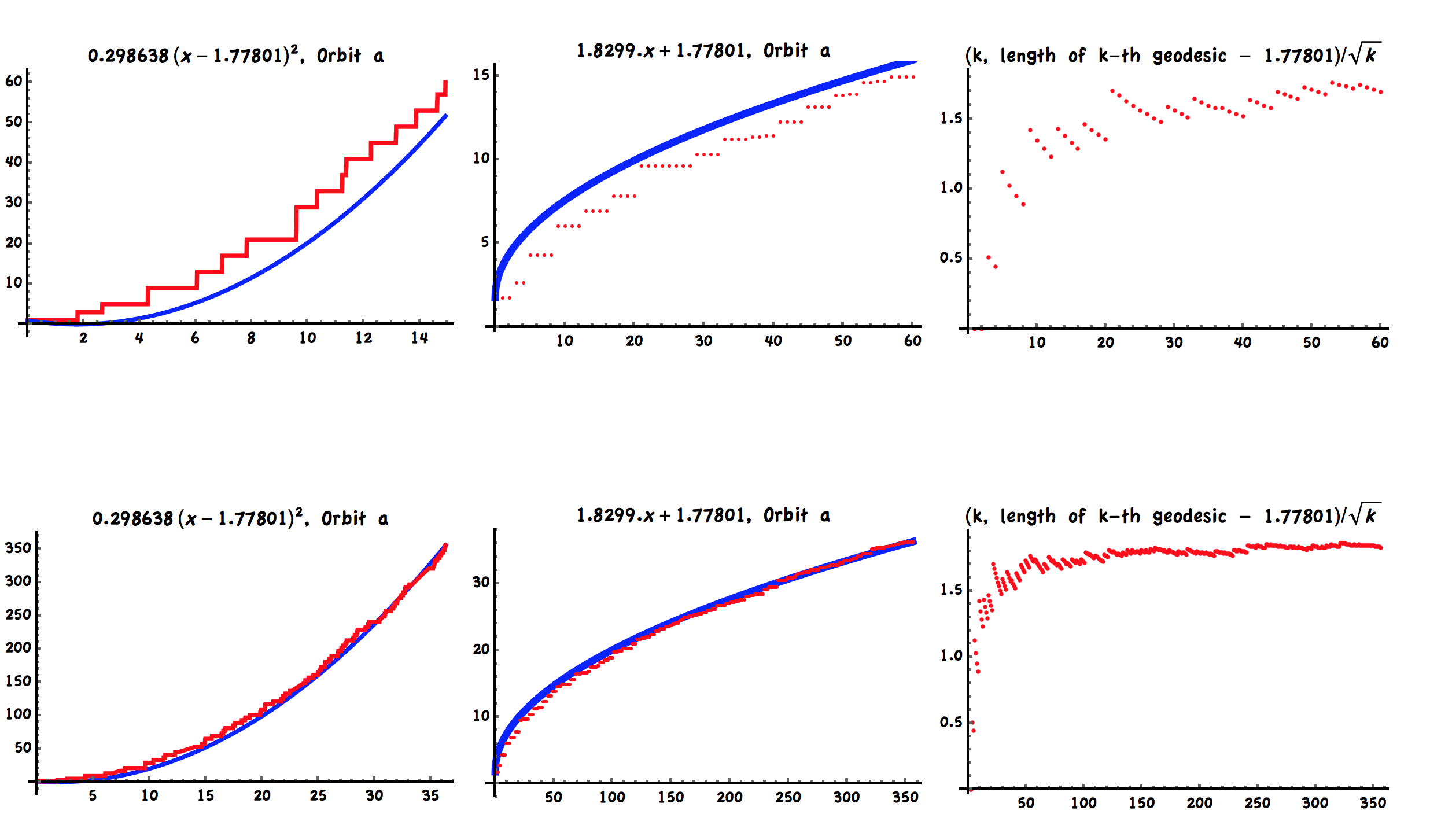} 
\includegraphics[scale=.325]{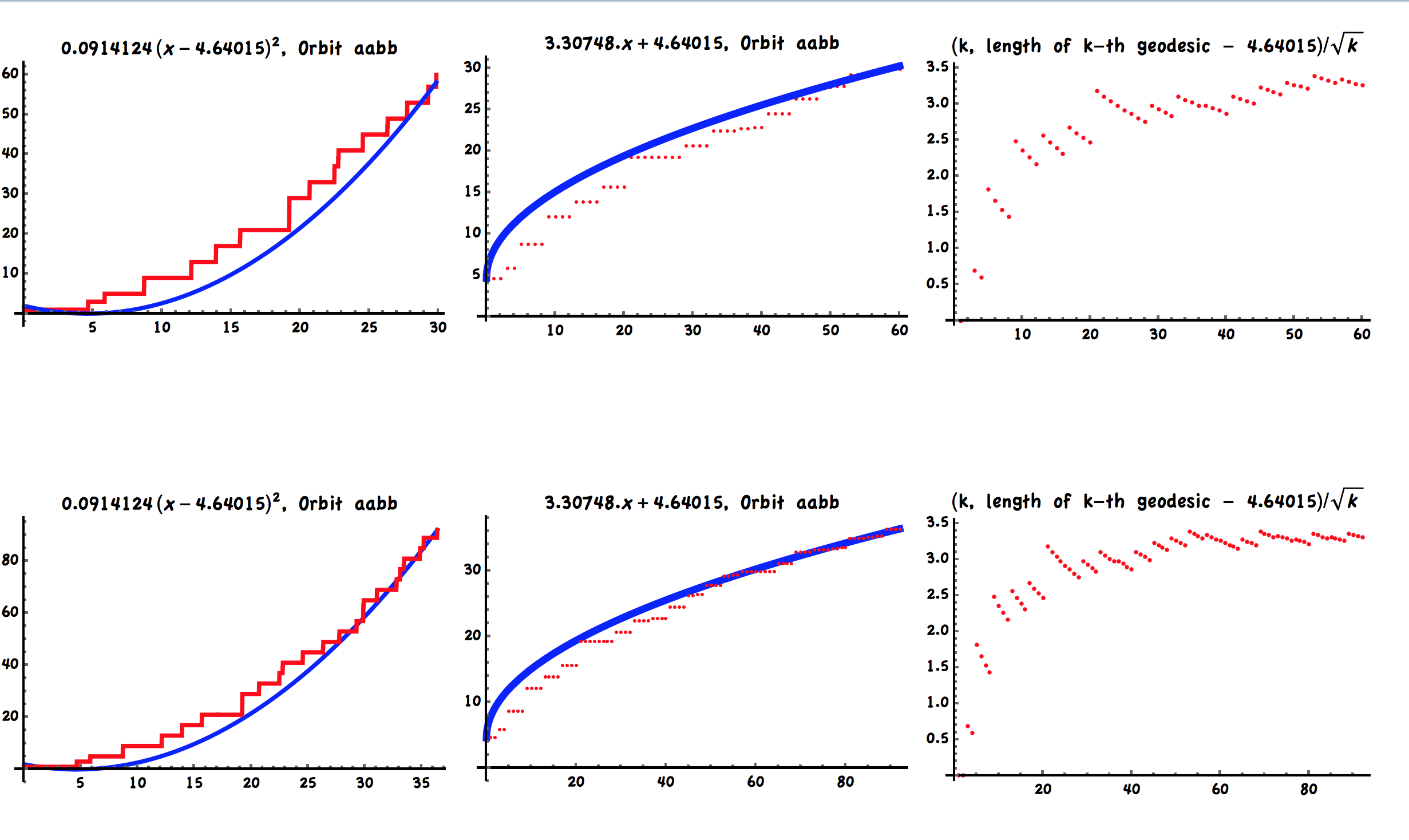} 
\includegraphics[scale=.325]{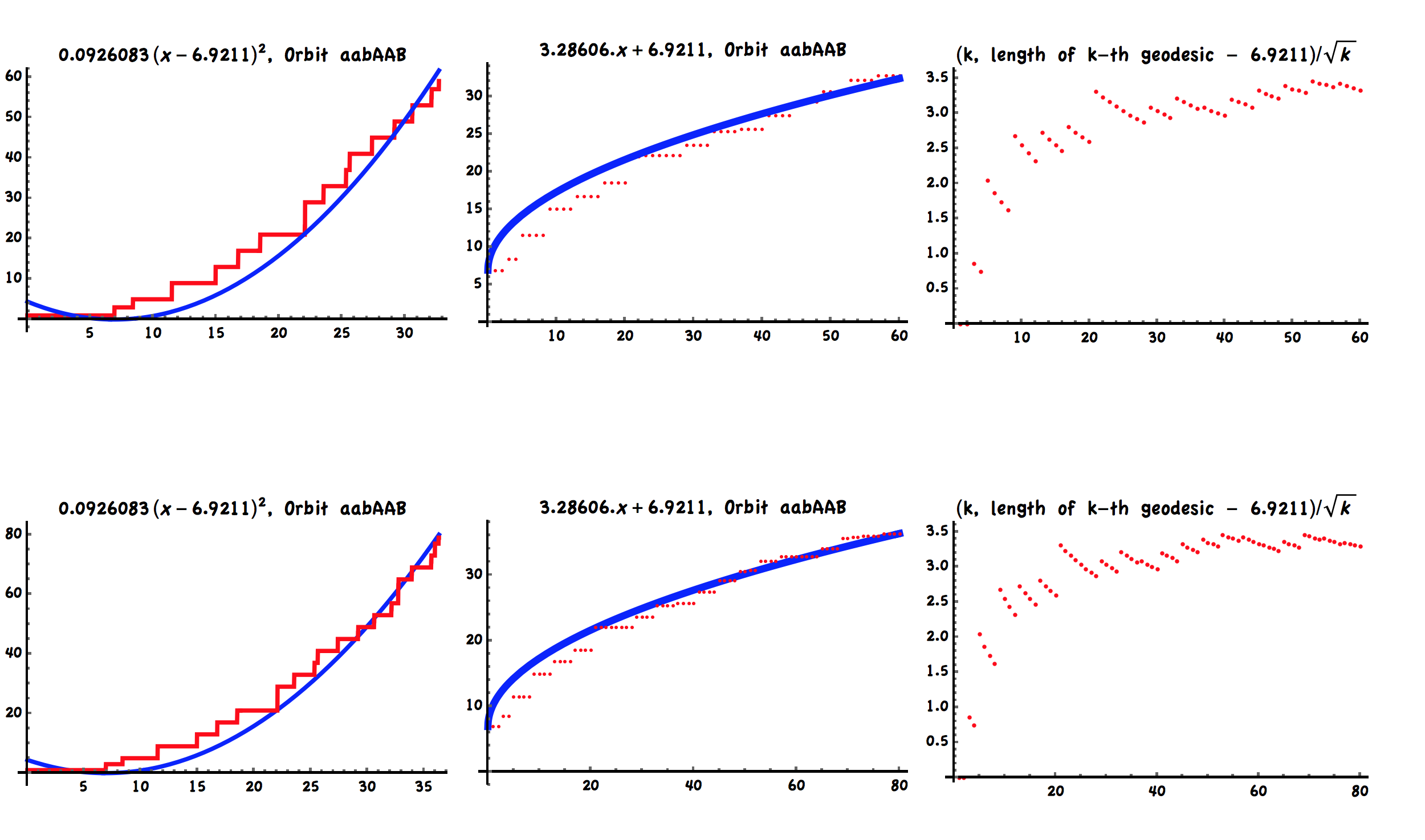} 

\caption{Study of the orbits of the curves $a$, $abaB$ and $aabAAB$ for the metric $(l_1,l_2,l_3)=(0.89,0.889,0.2149)$
}\label{aabb}
\end{figure}
\begin{figure}
\includegraphics[scale=.325]{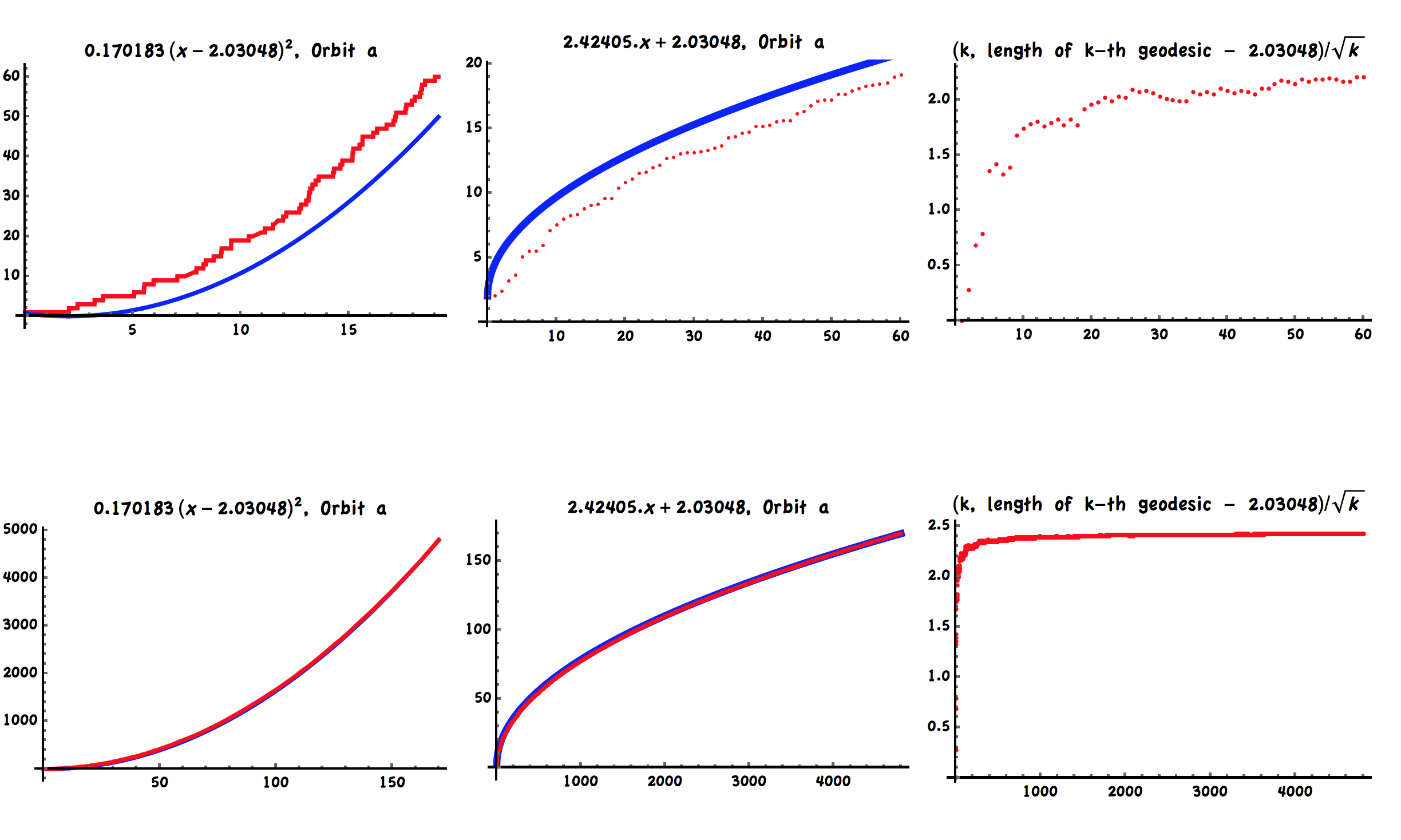} 
\includegraphics[scale=.325]{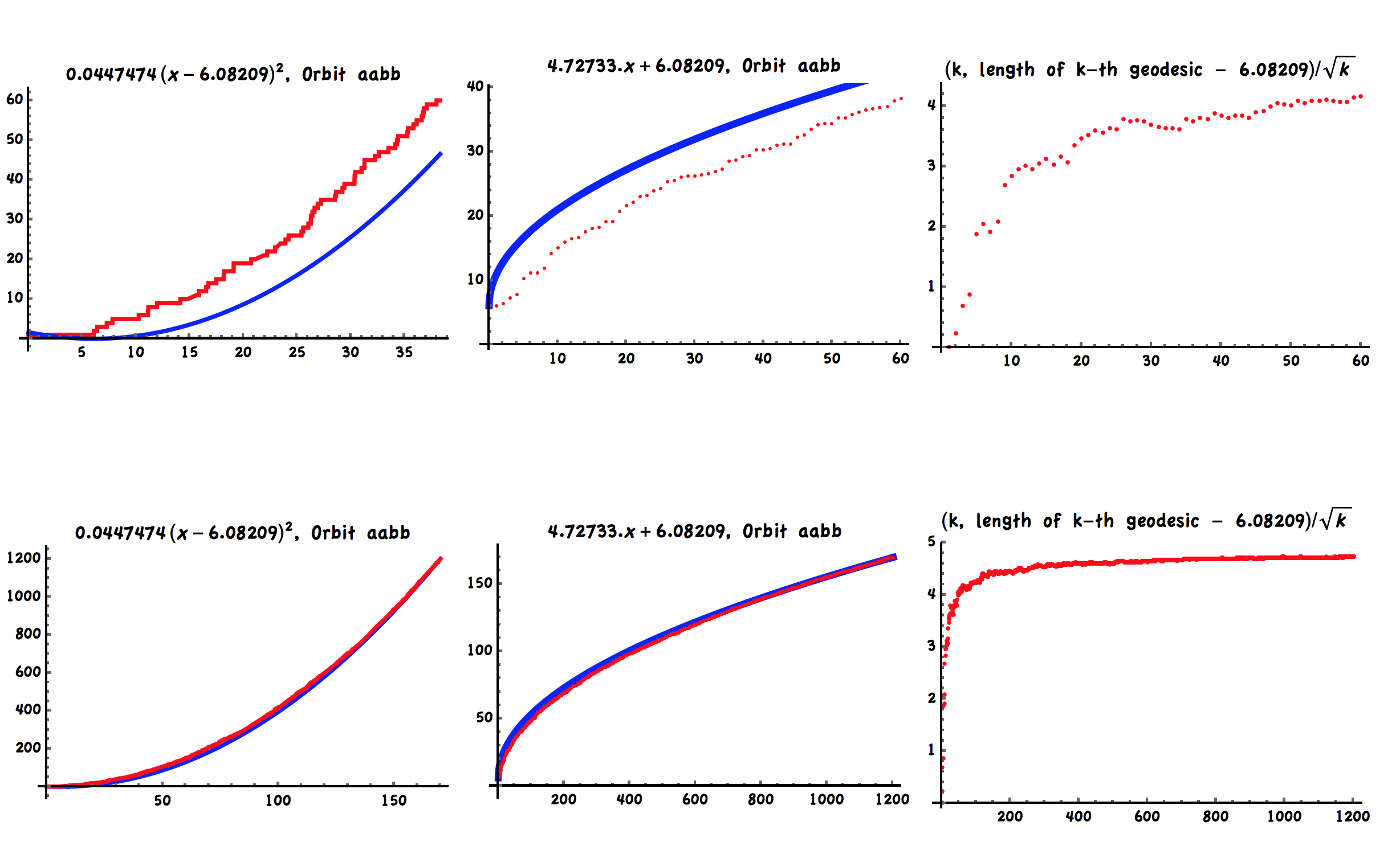} 
\includegraphics[scale=.325]{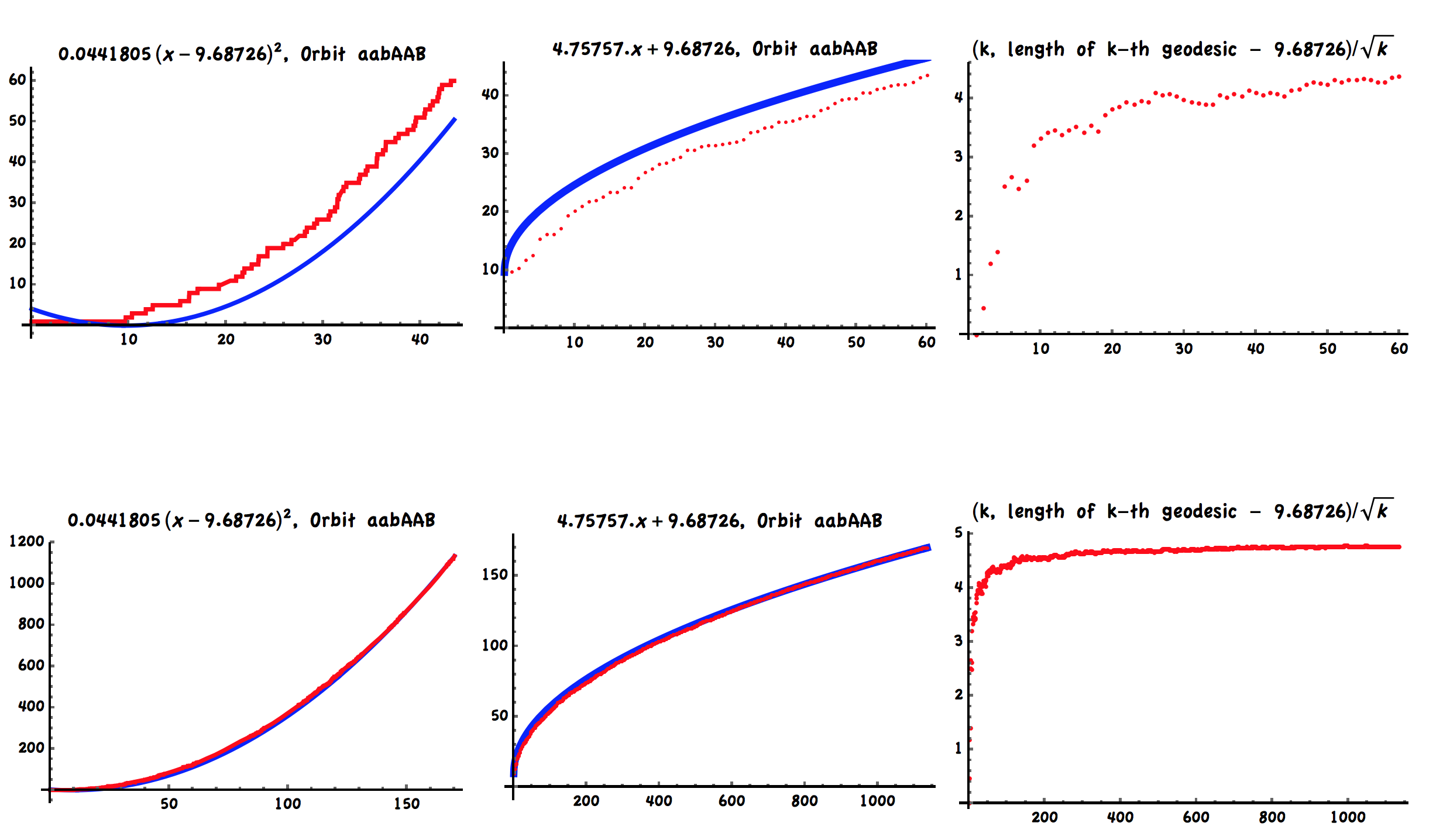}

\caption{Study of the orbits of the curves $a$, $abaB$ and $aabAAB$ for the metric $(l_1,l_2,l_3)=(1,1.2,1.012)$}\label{aabb1}
\end{figure}

\section{Conjectures}
\subsection{Geometric conjectures}

All our computations (for all classes of curves of self intersection number at most 3) suggest the following conjecture:

\begin{conjecture}\label{palpha} $p_\alpha =2c_\alpha $, where $c_\alpha$ is the "Mirzakhani" geometric length coefficient and $p_\alpha$ is the word length growth coefficient. Thus,
$$
s_m(\ell,\gamma) \sim  \frac{3}{2\pi^2} \mu_{Th}(B_m(1))p_\gamma \ell^2
$$
\end{conjecture}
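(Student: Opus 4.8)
The plan is to read both coefficients as the leading term of one and the same counting problem — counting the points of the orbit $\orb{}(\alpha)$ that lie in a ball — measured once with hyperbolic length and once with word length, and then to invoke the factorization behind Mirzakhani's and Erlandsson--Souto's curve counting, in which the orbit-dependent constant is \emph{the same} for every reasonable length function while only a ``unit-ball volume'' factor changes with the choice of length. If word length can be realized as an intersection number against a fixed geodesic current, the two coefficients differ only by a ratio of Thurston volumes that is independent of $\alpha$, and that ratio is pinned down by the single case $\alpha=a$.

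The decisive step is to exhibit word length as a geometric length function. By construction $\wl(\gamma)$ is read off by recording the crossings of a geodesic representative of $\gamma$ with the two arcs dual to the generators $a$ and $b$; since these arcs cut $T_{1,1}$ into a disk, I expect
\[
\wl(\gamma)=i(\gamma,\mathrm{arc}_a)+i(\gamma,\mathrm{arc}_b)=i(\gamma,\mu_{\wl}),\qquad \mu_{\wl}:=\mathrm{arc}_a+\mathrm{arc}_b,
\]
the intersection number of $\gamma$ with a fixed \emph{filling} current $\mu_{\wl}$, where $i(\,\cdot\,,\cdot\,)$ is the geometric intersection pairing extended to currents. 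I would first check this on simple classes, where $\orb{}(a)$ is indexed by primitive slopes $(j,k)$ and $\wl=|j|+|k|$ matches Proposition~\ref{simple}, and then argue the exact identity for arbitrary classes directly from the crossing description (the cyclically reduced word \emph{is} the crossing sequence).

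Granting this, I would apply the counting-with-currents theorem of Erlandsson--Souto \cite{es}, building on Mirzakhani \cite{mir,m16}, to the current $\mu_{\wl}$. That theorem gives that the rescaled counting measures $\ell^{-2}\sum_{\beta\in\orb{}(\gamma)}\delta_{\beta/\ell}$ converge, on the space of geodesic currents, to $c_\gamma\,\mu_{Th}$, where $c_\gamma$ is \emph{precisely} Mirzakhani's orbit constant — the piece of the asymptotic depending only on the topological type of $\gamma$, and hence independent of the length function used. Evaluating the limit on the relatively compact set $\{i(\,\cdot\,,\mu_{\wl})\le 1\}$ gives
\[
\countupto{\ell}(\gamma)=\#\{\beta\in\orb{}(\gamma):\wl(\beta)\le\ell\}\sim c_\gamma\,\mu_{Th}\!\left(\{\lambda: i(\lambda,\mu_{\wl})\le 1\}\right)\ell^2 .
\]
Comparing with the normalization $\countupto{\ell}(\gamma)\sim\frac{6}{\pi^2}p_\gamma\,\ell^2$ of Table~\ref{t2} yields $p_\gamma=\kappa\,c_\gamma$ with $\kappa=\frac{\pi^2}{6}\mu_{Th}(\{i(\,\cdot\,,\mu_{\wl})\le 1\})$ a single constant independent of $\gamma$. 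Evaluating at $\gamma=a$, where $p_a=1$ by Proposition~\ref{simple} and $c_a=\tfrac12$ by \cite{mir}, forces $\kappa=2$ and hence $p_\gamma=2c_\gamma$ for every orbit.

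The hard part is the first step, not the bookkeeping. Word length is a priori combinatorial, and feeding it into the current machinery requires working in the space of currents on the \emph{punctured} surface that accommodates $\mathrm{arc}_a,\mathrm{arc}_b$, whose endpoints at the puncture give non-compact support; one must also establish that $\wl$ equals $i(\,\cdot\,,\mu_{\wl})$ \emph{exactly}, or at least coarsely with an error that is of lower order along the orbit, so that neither the limiting measure nor the leading coefficient is disturbed (the unoriented-versus-oriented conventions must likewise be matched consistently for both length functions so that they cancel in the ratio). Verifying that $\mu_{\wl}$ is a genuine filling current to which the Erlandsson--Souto convergence applies is where the real content lies; once that is secured, the factor $2$ is automatic from the calibration at $\alpha=a$.
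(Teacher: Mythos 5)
The statement you are proving is stated in the paper as a \emph{conjecture}: the author offers no proof, only numerical evidence (the ratios $h(\gamma,m_j)/h(a,m_j)$ of Table~\ref{ratios} computed for four metrics and all orbits of self-intersection at most $3$). So there is no argument in the paper to compare yours against; what you propose is a genuine attempt to upgrade the conjecture to a theorem, and your strategy --- realize $\wl$ as $i(\cdot,\mu_{\wl})$ for a fixed current $\mu_{\wl}=\mathrm{arc}_a+\mathrm{arc}_b$, apply the Erlandsson--Souto convergence of rescaled orbit counting measures to $c_\gamma\,\mu_{Th}$ so that the orbit-dependent constant is length-function-independent, and calibrate the remaining $\gamma$-independent volume factor at $\gamma=a$ using $p_a=1$, $c_a=\tfrac12$ --- is the right one; it is essentially the route taken in the later literature on this problem. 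The bookkeeping at the end is correct and does force $\kappa=2$.

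However, as written this is a program rather than a proof, and you say so yourself: the two load-bearing steps are asserted, not established. First, the identity $\wl(\gamma)=i(\gamma,\mathrm{arc}_a)+i(\gamma,\mathrm{arc}_b)$ needs an actual argument (minimal position with respect to the arc system, no bigons, cyclically reduced words realized with exactly $\wl$ crossings); this is true and standard for the once-punctured torus, but it is a lemma, not an observation. Second, and more seriously, the Erlandsson--Souto theorem as you invoke it applies to intersection with a \emph{filling geodesic current}, and $\mu_{\wl}$ is a sum of arcs with endpoints at the puncture/boundary, which is not a compactly supported current on the surface; making the convergence statement apply to $i(\cdot,\mu_{\wl})$ (or replacing the exact identity by a coarse one with an error you can show is negligible at the level of the $\ell^2$ coefficient) is precisely where all the analytic content of the theorem lives, and your proposal defers it entirely. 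Until that step is supplied, you have not proved $p_\alpha=2c_\alpha$; you have reduced it to a known-to-be-hard (though, as it turns out, tractable) statement about currents. A smaller point: you should also verify, rather than assume, that the unoriented conventions of Table~\ref{t2} and the normalization $c_a=\frac{1}{2!}$ from \cite{mir} refer to the same counting convention, since an unnoticed factor of $2$ here would silently change $\kappa$.
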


\begin{conjecture}\label{inverse}  For each orbit $\orb{}(\alpha)$ and each hyperbolic metric  there exist coefficients $u,v$ such that the difference $(u\sqrt{k}+v) -\mbox{length of the $k$-th geodesic in the orbit of $\alpha$}$ is bounded by a constant that depends on $\alpha$ and the metric.
\end{conjecture}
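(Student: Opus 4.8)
The plan is to obtain Conjecture~\ref{inverse} as the exact inverse, or quantile, counterpart of Mirzakhani's quadratic asymptotic~(\ref{mirz}), upgraded from a leading-order statement to one with a controlled remainder. Write $N(\ell)=s_m(\ell,\gamma)$ for the counting function and let $\ell_1\le\ell_2\le\cdots$ be the lengths of the geodesics in $\orb{}(\gamma)$ listed with multiplicity, so that $N$ and the sequence $(\ell_k)$ are generalized inverses: $N(\ell)=\#\{k:\ell_k\le\ell\}$. Equation~(\ref{mirz}) already forces the leading coefficient, $u=1/\sqrt{C_\gamma}$ with $C_\gamma=\frac{\mu_{Th}(B_m(1))}{\pi^2/6}\,c_\gamma$; the entire content of the conjecture is therefore the boundedness of the fluctuation, which I claim follows once the count is known to second order, i.e. once one proves
\[
N(\ell)=C_\gamma\,\ell^2+O(\ell).
\]

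The inversion step is elementary and I would carry it out first. Assuming the displayed estimate together with a uniform bound $m_0$ on the multiplicity of the length spectrum (generic for $m$), one has $k\le N(\ell_k)\le k+m_0$, hence $C_\gamma\ell_k^2=k+O(\ell_k)=k+O(\sqrt k)$, and solving gives $\ell_k=\sqrt{k/C_\gamma}+O(1)=u\sqrt k+O(1)$. In particular $(u\sqrt k+v)-\ell_k$ is bounded for every constant $v$, which is the assertion of the conjecture; a cleaner count $N(\ell)=C_\gamma\ell^2+C'_\gamma\ell+o(\ell)$ would even pin down $v=-C'_\gamma/(2C_\gamma)$ and force the difference to tend to $0$.

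The real work is the refined count, and here I would exploit the exceptionally simple structure of $T_{1,1}$. The space of measured laminations is $\mathrm{ML}(T_{1,1})\cong\R^2$ with Thurston measure equal to a constant times Lebesgue measure, the mapping class group acts as $\gl_2(\Z)$, and the geometric length extends to a continuous, degree-one-homogeneous function $\Lambda_m\colon\R^2\to\R_{\ge0}$. The orbit $\orb{}(\gamma)$ is then a single $\gl_2(\Z)$-orbit of a point of $\mathrm{ML}$, and $N(\ell)$ counts the points of this orbit inside the dilated region $\ell\cdot\{\Lambda_m\le1\}$. For the simple orbit $\orb{}(a)$ these points are exactly the primitive integer vectors, and for the other orbits they form a comparable discrete set. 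Since $\Lambda_m$ is convex (length functions on $\mathrm{ML}$ are convex), the region $\{\Lambda_m\le1\}$ is a convex body with rectifiable boundary, so classical lattice-point counting in a dilated convex body gives the \emph{full}-lattice count with error $O(\ell)$, and the Thurston measure of the unit ball recovers the constant $C_\gamma$ of~(\ref{mirz}).

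The main obstacle is the passage from the full lattice to the primitive (or orbit) sublattice. For the simple case this reintroduces the arithmetic of primitivity: a Möbius/Euler-totient inversion replaces the clean $O(\ell)$ by $O(\ell\log\ell)$, which through the inversion step yields only $\ell_k=u\sqrt k+O(\log k)$ rather than a bounded fluctuation. Removing this logarithm — that is, proving genuine boundedness — is exactly the point where the totient function governing the combinatorial counts (Proposition~\ref{simple}, Table~\ref{t2}) re-enters the geometric picture, and it appears to require cancellation in the error term of the summatory totient that is not available by elementary means; this is where I expect the principal difficulty to lie. Two further points must be secured: bounded multiplicity of the length spectrum for generic $m$ (needed for the inversion step), and the extension of the whole scheme to the non-simple orbits of self-intersection up to three, where the relevant point of $\mathrm{ML}$ is irrational and one must replace elementary lattice counting by an effective equidistribution statement for the $\gl_2(\Z)$-orbit.
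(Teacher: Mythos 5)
The paper does not prove this statement: Conjecture~\ref{inverse} is offered purely as an empirical observation, supported by the plots of $k\mapsto\frac{1}{\sqrt k}(-u+\text{length of the }k\text{-th geodesic})$ in Figures~\ref{aabb} and~\ref{aabb1}. So there is no proof to compare yours against; the only question is whether your argument stands on its own, and it does not. Your inversion step is sound (indeed you do not even need bounded multiplicity: $N(\ell_k-\varepsilon)<k\le N(\ell_k)$ already yields $|C_\gamma\ell_k^2-k|=O(\ell_k)$), but the entire weight of the argument rests on the second-order count $N(\ell)=C_\gamma\ell^2+O(\ell)$, which you do not establish. That estimate is essentially a restatement of Conjecture~\ref{bounded distance} of the same paper, so at best you have shown that one of the paper's conjectures implies another; nothing has been proved.

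The gap is not merely that the $O(\ell)$ error term is "not available by elementary means" --- there is real evidence it may fail at that strength. In the word-length model of the simple orbit the counting function is exactly $\sum_{n\le\ell}2\Phi(n)$ (Proposition~\ref{simple}), and Montgomery showed that $\sum_{n\le x}\Phi(n)-\frac{3}{\pi^2}x^2$ is $\Omega_\pm\bigl(x\sqrt{\log\log x}\bigr)$, while M\"obius inversion gives only $O(x\log x)$; as you note, this feeds through the inversion to give $\ell_k=u\sqrt k+O(\log k)$, which is strictly weaker than the bounded fluctuation the conjecture asserts. A genuine proof would therefore have to explain why passing from the integer lattice count to real geometric lengths suppresses these arithmetic oscillations --- a phenomenon your scheme has no mechanism for. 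Secondary unproved inputs compound this: you restrict to metrics with generically bounded length-spectrum multiplicity although the conjecture quantifies over \emph{every} hyperbolic metric; the convexity of the length function on $\mathrm{ML}(T_{1,1})$ in the piecewise-linear coordinates is asserted, not proved; and for the non-simple orbits you invoke an effective equidistribution statement for $\mathrm{GL}_2(\Z)$-orbits of irrational points of $\mathrm{ML}$ that you only name. The proposal is a reasonable research program, but it is not a proof.
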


The pictures suggest:

\begin{conjecture}\label{bounded distance}  For each orbit $\orb{}(\alpha)$ and each hyperbolic metric  there exist a quadratic polynomial $p_\alpha(\ell)=w(\ell-min(\alpha,m))^2$, (where $min(\alpha,m)$ is the length of the shortest geodesic on the orbit ) such that for all $\ell$ the difference between $p_\alpha(\ell)$ and the number of  elements in the orbit of $\alpha$ of word length up to $\ell$ is bounded.
\end{conjecture}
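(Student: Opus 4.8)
The plan is to compute the word-length function $\countupto{\ell}(\alpha)$ directly from the explicit per-length formulae and then to compare it with the proposed polynomial. The first step is to record, for each orbit, a closed form $\countequal{\ell}(\alpha)=\sum_i a_i\,\Phi(\lambda_i\ell+\beta_i)$ valid on a fixed arithmetic progression of $\ell$, with the $a_i$ positive integers and $\lambda_i,\beta_i$ rational; these are exactly the entries of Tables~\ref{t2} and \ref{t3}. For the orbit of $a$ this is Proposition~\ref{simple}, for $aabAB$ and $abaB$ it is Proposition~\ref{selfint one}, and for the rows marked ``like Prop.~\ref{selfint one}'' it follows from the same cut-off and fill-in surgeries; the remaining rows require an analogous mapping-class-group argument trading the boundary loop for a fixed block of letters, which I would establish first.

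Next I would pass from $\countequal{\ell}$ to $\countupto{\ell}$ by summation. After the substitutions that turn each $\Phi(\lambda_i\ell+\beta_i)$ into an ordinary totient sum, $\countupto{\ell}(\alpha)$ becomes a finite integer combination of partial sums $\sum_{k\le K_i(\ell)}\Phi(k)$, with each $K_i$ an affine function of $\ell$. Invoking the classical estimate $\sum_{k\le x}\Phi(k)=\frac{3}{\pi^2}x^2+E(x)$ and expanding the squares $K_i(\ell)^2$ yields
\begin{equation}
\countupto{\ell}(\alpha)=\frac{6}{\pi^2}\,p_\alpha\,\ell^2+A_\alpha\,\ell+B_\alpha+\sum_i c_i\,E\bigl(K_i(\ell)\bigr),
\end{equation}
where the quadratic coefficient reproduces the tabulated $p_\alpha$ and $A_\alpha,B_\alpha,c_i$ are explicit rationals. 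Setting $w=\frac{6}{\pi^2}p_\alpha$ and completing the square produces a shift $s=-A_\alpha/(2w)$ for which $w(\ell-s)^2$ matches the quadratic and linear parts, so that $\countupto{\ell}(\alpha)-w(\ell-s)^2$ reduces to a constant plus the error combination $\sum_i c_i\,E(K_i(\ell))$.

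The hard part is the final bound, and it splits into two entangled difficulties. First, the statement demands that the square be completed at the geometric shift $\min(\alpha,m)$, whereas the shift forced by the arithmetic is the rational number $s=-A_\alpha/(2w)$; for the orbit of $a$ this arithmetic shift is $0$, while $\min(a,m)>0$, so replacing $s$ by $\min(\alpha,m)$ reintroduces a linear term in $\ell$. Reconciling the two shifts is precisely the content of Conjectures~\ref{palpha} and \ref{inverse}, and is where the hyperbolic geometry, rather than the combinatorics alone, must be brought to bear. Second, the residual $\sum_i c_i\,E(K_i(\ell))$ is a fixed linear combination of shifted copies of the summatory-totient error term, for which the sharpest known estimate is $O(\ell\log\ell)$; deciding whether the orbit-specific coefficients $c_i$ force enough cancellation for this residual to remain inside the bounded window claimed by the conjecture is the crux, and it is this number-theoretic error control that I expect to be the main obstacle.
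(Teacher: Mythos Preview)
The paper gives no proof of this statement: it is listed as a conjecture, introduced by the sentence ``The pictures suggest:'' and supported only by the numerical graphics in Figures~\ref{aabb} and~\ref{aabb1}. There is therefore no argument in the paper to compare your proposal against.

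Your proposal has a more basic problem, however. The conjecture sits in the subsection \emph{Geometric conjectures}, quantifies over hyperbolic metrics $m$, and fixes the shift at the geometric quantity $\min(\alpha,m)$; the phrase ``word length'' in the statement is almost certainly a slip for geometric length (the function being modeled in the pictures is $\ell\mapsto s_m(\ell,\gamma)$, not $\ell\mapsto\countupto{\ell}(\alpha)$). Your entire plan treats the combinatorial count $\countupto{\ell}(\alpha)$, so it is aimed at a different statement from the one intended.

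Even granting the word-length reading, the plan cannot succeed. First, it rests on the table formulae for $\countequal{\ell}(\alpha)$, several of which are themselves open (they are the content of Conjecture~\ref{2and14}). Second, and decisively, consider the orbit of $a$: here $\countupto{\ell}(a)=2\sum_{n\le\ell}\Phi(n)$ is a \emph{single} totient sum, so there are no coefficients $c_i$ to produce the cancellation you hope for. The error term $E(x)=\sum_{n\le x}\Phi(n)-\tfrac{3}{\pi^2}x^2$ is known to satisfy $E(x)=\Omega_\pm\bigl(x\sqrt{\log\log x}\bigr)$ (Montgomery), hence $\countupto{\ell}(a)-w(\ell-s)^2$ is unbounded for every choice of $w$ and $s$. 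In other words, the word-length version of the conjecture is provably false already for the simplest orbit, which is another indication that the intended statement concerns geometric length. The genuine content of the conjecture is about the geodesic-length counting function $s_m(\ell,\gamma)$, and nothing in your outline touches that.
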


\subsection{Topological-combinatorial conjectures}

\begin{conjecture} \label{lc}
For each free homotopy class $\alpha$ there exist integers $j_1,\dots, j_n$,  and positive integers $k_1,\dots, k_n$, such that the number of elements of word length $\ell$ in the orbit of $\alpha$ is
$2\left(\Phi\left(\frac{\ell + j_ 1}{k_1}\right)+\Phi\left(\frac{\ell + j_ 2}{k_2}\right)+\cdots+\Phi\left(\frac{\ell + j_ n}{k_n}\right)\right)$.
In symbols,
$$
\countequal{\ell}(\alpha)=2\left(\Phi\left(\frac{\ell + j_ 1}{k_1}\right)+\Phi\left(\frac{\ell + j_ 2}{k_2}\right)+\cdots+\Phi\left(\frac{\ell + j_ n}{k_n}\right)\right).
$$
Hence the number of elements of word length less than or equal to $\ell$ in the orbit of $\alpha$ is asymptotic to $d \cdot \ell^2$, where  
 $d=\frac{6}{\pi^2}
 \left(\frac{1}{k_1^2}+\frac{1}{k_2^2}+\cdots+\frac{1}{k_n^2}\right) = 
  \left(\frac{1}{k_1^2}+\frac{1}{k_2^2}+\cdots+\frac{1}{k_n^2}\right)/
 \left(\frac{1}{1^2}+\frac{1}{2^2}+\cdots+\frac{1}{n^2}\cdots\right)
$

\end{conjecture}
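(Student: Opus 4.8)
The plan is to prove the exact counting formula first and then read off the asymptotic statement, since once $\countequal{\ell}(\alpha)=2\sum_{i}\Phi\!\left(\tfrac{\ell+j_i}{k_i}\right)$ is known, the growth of $\countupto{\ell}(\alpha)$ follows by summing term by term and invoking $\sum_{n\le L}\Phi(n)\sim\tfrac{3}{\pi^2}L^2$ \cite{hw}. The whole difficulty therefore lies in the exact formula. The organizing principle is that $M(1,1)\cong SL(2,\Z)$ acting on unoriented classes (the hyperelliptic involution acting trivially), so every orbit $\orb{}(\alpha)$ is a single $SL(2,\Z)$-orbit, and the base case Proposition~\ref{simple} already exhibits the mechanism: the simple orbit is parametrized by primitive vectors $(p,q)$, $\gcd(|p|,|q|)=1$, the word length of the corresponding simple class is $|p|+|q|$, and the number of such vectors of a given height $L$ (with the normalization $q\ge 0$ forced by working with unoriented classes) is exactly $2\Phi(L)$. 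My aim is to reduce every orbit to this count.

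First I would generalize the two maps appearing in Proposition~\ref{selfint one}. For a class $\alpha$ with $\si(\alpha)=k$ I would produce a reduction map $\orb{\ell}(\alpha)\to\orb{}(a)$ onto the simple orbit by the topological operations already used: cutting off the loops encircling the boundary component and/or filling in the boundary to pass to the closed torus. The homology class of $\alpha$ is an integer multiple $w\cdot(p,q)$ of a primitive vector, and this multiplicity $w$ is the invariant that governs the formula. After the reduction, each element of $\orb{\ell}(\alpha)$ is recorded by an underlying primitive simple class $\gamma_{(p,q)}$ traversed $w$ times together with a \emph{fixed} local decoration (the loops and self-crossings prescribed by the combinatorial type of $\alpha$), and the resulting parametrization of $\orb{}(\alpha)$ by the pair $\big((p,q),\text{decoration}\big)$ is the direct generalization of the bijections of Propositions~\ref{simple} and~\ref{selfint one}.

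The crux is the \textbf{word-length formula}: I must show that along the whole orbit the cyclically reduced length is affine in the height of the underlying simple class,
$$
\wl\!\left(\alpha_{(p,q)}\right)=w\,(|p|+|q|)+j,
$$
with constants $w,j$ depending only on the orbit. Here the cutting-sequence (Christoffel/Sturmian) description of $\gamma_{(p,q)}$ is essential: its cyclic word is the balanced arrangement of $|p|$ and $|q|$ letters, of length $|p|+|q|$, and a curve following it $w$ times with a fixed decoration should have a cyclic word obtained from the $w$-fold balanced word by a local substitution inserting a constant number of letters independent of the slope. The claim that this substitution contributes a slope-independent additive constant $j$, and in particular that the cyclic reduction producing the shortest word does not trigger slope-dependent cancellation, is exactly where the argument is delicate; this is the step I expect to be the main obstacle, and it is presumably why the entries marked ``Unproven'' in Tables~\ref{t2} and~\ref{t3} resist the elementary cut-and-fill argument. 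The residue conditions in the tables (word length even, a multiple of $3$, $\equiv 1\pmod 3$, and so on) are then explained: $\wl=\ell$ is solvable only when $\ell\equiv j\pmod w$, and then the underlying height is $L=(\ell-j)/w$.

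Granting the word-length formula, the count is immediate. The reduction to the simple orbit has constant fiber size $m$ (e.g.\ $m=2$ for $aabAB$, reflecting the two ways to reattach the boundary loop), and the orbit may split into finitely many combinatorial pieces, each reducing with its own pair $(w_i,j_i)$; summing the base count $2\Phi(L)$ over these pieces and fibers yields
$$
\countequal{\ell}(\alpha)=2\left(\Phi\!\left(\tfrac{\ell+j_1}{k_1}\right)+\cdots+\Phi\!\left(\tfrac{\ell+j_n}{k_n}\right)\right),
$$
where each $k_i$ equals the multiplicity $w_i$ of the corresponding piece and repeated terms encode fiber multiplicity, as in $4\Phi(\ell-4)=2\big(\Phi(\ell-4)+\Phi(\ell-4)\big)$ for $aabAB$. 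Finally, summing over $n\le\ell$ and applying the totient asymptotic gives $\countupto{\ell}(\alpha)\sim\frac{6}{\pi^2}\big(\sum_i k_i^{-2}\big)\ell^2$, the asserted $d\cdot\ell^2$; this last step is routine.
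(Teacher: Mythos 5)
The first thing to note is that the statement you are proving is Conjecture~\ref{lc}: the paper offers no proof of it. What the paper actually establishes is the base case (Proposition~\ref{simple}) and the two self-intersection-one orbits (Proposition~\ref{selfint one}); everything beyond that, including several rows of Table~\ref{t2} explicitly marked ``Unproven,'' rests on computer experiments. So there is no proof in the paper to match your attempt against, and a complete argument here would be new mathematics, not a reconstruction.

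Judged on its own terms, your proposal is a sensible program --- it is the natural generalization of the cut-off-the-boundary-loop and fill-in-the-puncture maps of Proposition~\ref{selfint one} --- but it has a genuine gap at exactly the point you flag yourself, and at one point you do not flag. First, the existence of a finite-to-one, constant-fiber ``reduction map'' from an arbitrary orbit to the simple orbit is asserted rather than constructed: for self-intersection $\ge 2$ a class need not be a $w$-fold cover of a simple class with a local decoration. Consider $aaabb$, whose homology class $3[a]+2[b]$ is already primitive, yet whose conjectured count $2\Phi(\ell)+2\Phi(\ell-4)$ involves two terms with $k_1=k_2=1$; nothing in your setup explains how the single mapping class group orbit partitions into the two ``combinatorial pieces'' these terms would have to count. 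Worse, $aabAAB$ is null-homologous, so the homological multiplicity $w$ that organizes your argument is not even defined for it, yet its conjectured formula still has $k=2$. Second, the affine word-length formula $\wl(\alpha_{(p,q)})=w(|p|+|q|)+j$ requires showing that cyclic reduction of the decorated word never produces slope-dependent cancellation; you correctly identify this as the main obstacle but supply no argument, and this is precisely why the remaining table entries are labeled unproven. Until those two steps are carried out the proposal is a plausible strategy consistent with the known cases, not a proof; only the final paragraph (deducing the asymptotic $d\,\ell^2$ from the exact formula via the totient summation) is actually complete.
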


\begin{remark} Conjecture \ref{lc} implies that the number of elements up to word length $\ell$  in an orbit is asymptotic to $\frac{6}{\pi^2} p \ell^2$ where $p$ is a rational number that depends on the orbit.

\end{remark}
In the cases of self-intersection number $0, 1, 2,$ and $3$ the number of orbits is smaller than or equal to the sum of the growth coefficients of all orbits and 
 surprisingly close to the sum of these growth coefficients (see Table~\ref{coefficients}). This motivates the following:

\begin{conjecture}\label{approx}
For each $k \ge 0$,  the sum of  all coefficients $p_\alpha$ over all orbits of curves of self-intersection number $k$  is within one of the total number of orbits of self-intersection number $k$.
\end{conjecture}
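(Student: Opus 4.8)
The plan is to recast the conjecture as a near-cancellation identity and then attack it through a full classification of orbits together with their growth data. Write $N_k$ for the number of mapping-class-group orbits of free homotopy classes with $\si(\alpha)=k$, and $P_k=\sum_{\si(\alpha)=k}p_\alpha$ for the total of their growth coefficients. Since each orbit contributes exactly $1$ to $N_k$, the assertion that $P_k$ and $N_k$ differ by at most one is equivalent to
\[
\Big|\,P_k-N_k\,\Big|=\Big|\sum_{\si(\alpha)=k}\bigl(p_\alpha-1\bigr)\Big|\le 1 .
\]
Granting Conjecture~\ref{lc}, each orbit has $p_\alpha=\tfrac{1}{k_1^2}+\cdots+\tfrac{1}{k_{n_\alpha}^2}$, so the summands $p_\alpha-1$ take both signs: the ``fast'' orbits, whose word-length count is of the form $4\Phi(\ell-c)$ or $8\Phi(\ell-c)$ (so $p_\alpha=2$ or $4$), overshoot $1$, while the ``slow'' orbits supported only on even, or multiple-of-$3$, or multiple-of-$4$ lengths undershoot it. The whole force of the conjecture is that these opposite contributions almost exactly annihilate.

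First I would fix $N_k$ and the list of orbits. The mapping class group of $T_{1,1}$ acts on conjugacy classes essentially through $\gl_2(\Z)$, each level set of $\si$ is invariant and splits into finitely many orbits, and I would enumerate them as in \cite{crispetal}, recovering $N_1=2$, $N_2=6$, $N_3=14$ and the apparent pattern $N_k=2^{k+1}-2$ for $k\ge 1$. For each orbit I would then pin down $p_\alpha$, either combinatorially from its $\Phi$-formula as in Tables~\ref{t2} and~\ref{t3}, or geometrically through Conjecture~\ref{palpha}, writing $p_\alpha=2c_\alpha$ and computing $c_\alpha$ as the normalized Thurston volume of $\{\mu:\iota(\gamma_\alpha,\mu)\le 1\}$ in the space of measured laminations, exactly as for the simple class $a$, where $c_a=\tfrac12$ and $p_a=1$.

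With the orbits and coefficients assembled, the comparison itself is the heart of the matter. Because $p_\alpha-1$ changes sign, no orbit-by-orbit matching to unit contributions is possible; instead I would form the generating series $\sum_k P_k x^k$ and $\sum_k N_k x^k$ and try to show their difference has uniformly bounded coefficients. The case $k=0$ already shows the estimate is sharp and two-sided: there $N_0=2$ while $P_0=p_a+p_{abAB}=1+0=1$, the deficit of exactly $1$ arising from the peripheral orbit $abAB$, which is fixed by the mapping class group and hence contributes $p_{abAB}=0$. Moreover the signed discrepancy $P_k-N_k$ appears to alternate with $k$ (numerically $-1,\,+\tfrac14,\,-\tfrac{19}{36},\,+\tfrac{7}{144}$ for $k=0,1,2,3$), which is precisely the kind of structure a generating-function argument would have to explain.

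The hard part will be this cancellation estimate. There is neither a monotonicity nor a bijection to exploit, the sign of $P_k-N_k$ oscillates, and — as the ``Unproven'' entries of Tables~\ref{t2} and~\ref{t3} record — most of the individual coefficients $p_\alpha$ are themselves not yet established. A genuine proof therefore seems to demand first settling Conjecture~\ref{lc} (or its geometric counterpart Conjecture~\ref{palpha}) uniformly in $k$, then producing a closed-form description of how the denominators $k_1,\dots,k_{n_\alpha}$ are distributed across the $N_k$ orbits, and only afterwards extracting the delicate $O(1)$ bound on $\sum_{\si(\alpha)=k}(1-p_\alpha)$. Without such a uniform structure theory, the most one can currently do is verify the inequality for each fixed $k$ by exhausting its orbits, as the tables do through $k=3$.
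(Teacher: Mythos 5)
First, be aware that the statement you were asked to prove is stated in the paper as a conjecture: the author offers no proof, only the numerical evidence of Table~\ref{coefficients} for $k=0,1,2,3$. Your proposal, by its own admission, also does not prove it. What you have written is a research plan, and every load-bearing step in it is itself open: the exact formulae $\countequal{\ell}(\alpha)=2\sum\Phi((\ell+j_i)/k_i)$ on which your computation of $p_\alpha$ rests are Conjecture~\ref{lc} (and several entries of Tables~\ref{t2} and~\ref{t3} are explicitly labeled unproven); the identification $p_\alpha=2c_\alpha$ is Conjecture~\ref{palpha}; the classification of orbits for general $k$ is not available (the paper only cites $N_1=2$, $N_2=6$, $N_3=14$, and your extrapolation $N_k=2^{k+1}-2$ is an unsupported guess --- note the paper even corrects an earlier claim of $15$ orbits for $k=3$ down to $14$, so these counts are delicate); and the final ``cancellation estimate'' via generating functions is named but no mechanism whatsoever is proposed for why $\sum_{\si(\alpha)=k}(p_\alpha-1)$ should stay in $[-1,1]$. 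Reducing one conjecture to three harder ones plus an unspecified $O(1)$ bound is not a gap that can be papered over; it is the entire content of the problem.

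Two smaller but concrete errors: your signed discrepancy for $k=3$ is wrong --- $2059/144-14=43/144$, not $7/144$ (consistent with the decimal $14.2986$ in Table~\ref{coefficients}) --- so the ``alternating sign'' structure you propose to explain by generating functions is not even what the data shows ($-1,+\tfrac14,-\tfrac{19}{36},+\tfrac{43}{144}$ under your convention, and $0,+\tfrac14,-\tfrac{19}{36},+\tfrac{43}{144}$ under the paper's). Relatedly, your bookkeeping at $k=0$ conflicts with the paper: Table~\ref{coefficients} records one orbit for $k=0$, i.e.\ the peripheral class $abAB$ is excluded from the count, whereas you include it with $p_{abAB}=0$ to get $N_0=2$. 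Whether the conjecture is even true at $k=0$ depends on this convention, so it must be fixed explicitly rather than asserted. The honest conclusion --- which you reach in your last sentence --- is that at present the statement can only be verified orbit-by-orbit for $k\le 3$, which is exactly what the paper does; nothing in your proposal goes beyond that.
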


\begin{conjecture}\label{2and14}
\begin{enumerate}
\item The number of elements of word length $\ell$ in each of the six orbits of curves of self-intersection number two is listed in Table~\ref{t2}. 
\item The number of elements of word length $\ell$ in each of the fourteen orbits of curves of self-intersection number three is listed in  Table~\ref{t3}.
\end{enumerate}
\end{conjecture}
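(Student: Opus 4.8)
The plan is to prove both statements by the surgery method already used for Proposition~\ref{selfint one}, organizing the six (respectively fourteen) orbits into a small number of \emph{reduction families} and, within each family, exhibiting an explicit map from $\orb{\ell}(\alpha)$ to a simpler orbit whose count is known. The unifying principle is that $\wl(\gamma)$ equals the total number of crossings of a representative of $\gamma$ with the two arcs dual to $a$ and $b$, that is $\wl(\gamma)=i(\gamma,\delta_a)+i(\gamma,\delta_b)$ for the dual arcs $\delta_a,\delta_b$; since the mapping class group of $T_{1,1}$ acts on these intersection coordinates through its standard homomorphism to $\mathrm{SL}(2,\Z)$, counting the elements of a fixed orbit of a given word length amounts to counting primitive integer vectors on the level sets of a piecewise-linear word-length function. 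On each linear piece the level set $\{\,|j|+|k|=m,\ \gcd(j,k)=1\,\}$ contributes $2\Phi(m)$ exactly as in Proposition~\ref{simple}, which is the mechanism forcing every answer to be a $\Z$-combination of totients $\Phi((\ell+j_i)/k_i)$ and explaining the shape predicted by Conjecture~\ref{lc}.

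First I would dispatch the \emph{boundary-loop family}: the curves $a(abAB)^n$ and $a^nabAB$, together with their self-intersection-three analogues $a(abAB)^3$, $a^3abAB$, and so on. As indicated in the Remark following Proposition~\ref{selfint one}, peeling off each $abAB$-factor removes a loop encircling the boundary, hence exactly four letters, and yields a two-to-one surjection onto $\orb{\ell-4n}(a)$; composed with the count of Proposition~\ref{simple} this gives the tabulated $4\Phi(\ell-4n)$. Next I would treat the \emph{homological power family}, namely $abaB$, $aabaB$ and the entries whose totient argument is $\ell/k$ with $k=2,3,4$. For these the homology class is $k$ times a primitive class (for instance $abaB\mapsto(2,0)$ and $aabaB\mapsto(3,0)$), which I would use to show that the word length is identically divisible by $k$; de-powering the reduced cyclic word then produces a map onto $\orb{\ell/k}(a)$ whose fiber cardinality ($2$ or $4$, occasionally $1$ as for $abaB$) is read off from the symmetries of the representative. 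The null-homologous but even-length entries such as $aabAAB$ require instead a $\Z/2$-symmetry argument: the representative is invariant under the elliptic involution of $T_{1,1}$, which forces $\wl$ to be even and produces the halving together with the additive shift.

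The genuinely hard cases are the \emph{split family}, where the count is a sum of two or more totients: $aaabb$ with $2\Phi(\ell)+2\Phi(\ell-4)$, and, in self-intersection three, $aabAbaBAb$, $a^4b^2$ and $aabaBabAB$. For these the word-length function is not a single affine function of the underlying simple-curve slope but is genuinely multi-piece, so the orbit stratifies into sub-families indexed by a discrete combinatorial invariant --- the cyclic position of the self-intersections relative to the peripheral loop --- with each stratum reducing to the orbit of $a$ at a different word-length offset and thereby contributing one summand. The main obstacle, and the reason these entries are marked unproven, is precisely the verification that these reduction maps are \emph{surjective with the stated fiber cardinality}: one must produce a canonical cyclically reduced normal form for every element of the orbit and rule out unexpected cancellations or word-length drops arising from relations in the free group $\langle a,b\rangle$. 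I would carry this out with a Birman--Series / Cohen--Lustig style analysis of minimal self-intersecting cyclic words, which simultaneously pins down the fiber sizes and certifies that no sporadic element of lower word length is omitted or double-counted.

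Finally, the classification itself --- that there are exactly six orbits of self-intersection two and exactly fourteen of self-intersection three --- I would establish, using the same intersection-coordinate action of $\mathrm{SL}(2,\Z)$ together with the enumeration of \cite{crispetal} and the computational verification correcting the count of \cite{bck}. A representative of each orbit is fixed once and for all, the formula attached to it is proved by the family argument above, and the finitely many small-$\ell$ anomalies recorded in the Special cases columns of Tables~\ref{t2} and~\ref{t3} are checked by direct computation below the threshold at which the generic normal form first applies.
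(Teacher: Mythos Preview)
The paper does not prove this statement: it is stated as a conjecture, and Table~\ref{t2} explicitly marks the entries for $aaabb$, $aabAAB$, and $abaBabAB$ as ``Unproven'', with the caption adding only that the formulae hold ``up to very long word length, even for the orbits of self-intersection three''. The only cases the paper actually establishes are those labelled ``like Prop.~\ref{selfint one}'', via the same surgery argument you describe for your boundary-loop and homological-power families. So there is no paper proof to compare against; the paper's evidence for the remaining entries is entirely experimental.

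Your writeup is a plan, not a proof, and you are candid about this. For the boundary-loop and power families your reductions match the paper's partial arguments and are essentially complete. The substantive content of the conjecture, however, lies in the orbits that do \emph{not} reduce by a single surgery: your split family ($aaabb$, $aabAbaBAb$, $a^4b^2$, $aabaBabAB$) and the symmetry cases such as $aabAAB$. There you correctly name the obstacle --- proving surjectivity and the exact fiber cardinality of the reduction maps, and ruling out sporadic word-length drops --- but you do not carry it out; invoking a ``Birman--Series / Cohen--Lustig style analysis'' is a pointer to a method, not an execution of it. Until that normal-form analysis is actually performed orbit by orbit, the gap the paper leaves open remains open in your proposal as well.
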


{\renewcommand{\arraystretch}{1.5}
\begin{table}

\begin{tabular}{|c|c|c|c|c|}
\hline 
SI&0 & 1 & 2 & 3 \\ 
\hline 
p&1 & $\frac{9}{4}$ & $\frac{197}{36}$ & $\frac{2059}{144}$ \\ 
p Approx.&1 & $2.25$ & $5.47222$ & $14.2986$ \\ 
Number of orbits&1 & $2$ & $6$ & $14$ \\ 
\hline 
\end{tabular} 
\caption{The growth of all classes of    self-intersection number $0,1,2,3$ is $p\frac{6}{\pi^2}\ell^2$  }\label{coefficients}
\end{table}
}

Conjectures \ref{palpha}, \ref{inverse}, \ref{bounded distance}, \ref{lc} and \ref{approx} can of course be generalized to all negatively curved surfaces.

\bibliographystyle{plain}
\bibliography{curves}

\enddocument